\documentclass[a4paper, 11pt, reqno]{amsart}

\usepackage{amssymb}
\usepackage[margin=2.5cm]{geometry}

\newtheorem{theorem}{Theorem}[section]
\newtheorem{lemma}[theorem]{Lemma}
\newtheorem{lem}[theorem]{Lemma}
\newtheorem{conj}[theorem]{Conjecture}

\newtheorem{claim}[theorem]{Claim}
\newtheorem{obs}[theorem]{Observation}

\theoremstyle{definition}

\theoremstyle{remark}

\newcommand{\ind}{ind}

\newcommand{\Pp}{\mathcal{P}}

\numberwithin{equation}{section}

\begin{document}

\title{On the inducibility of cycles}


\author{Dan Hefetz}
\address{Department of Computer Science, Ariel University, Ariel 40700,
Israel}
\curraddr{}
\email{danhe@ariel.ac.il}
\thanks{}

\author{Mykhaylo Tyomkyn}
\address{School of Mathematics, Tel Aviv University, Tel Aviv 69978, Israel}
\curraddr{}
\email{tyomkynm@post.tau.ac.il}
\thanks{The second author is supported in part by ERC Starting Grant 633509}


\date{\today}

\begin{abstract}
In 1975 Pippenger and Golumbic proved that any graph on $n$ vertices admits at most $2e(n/k)^k$ induced $k$-cycles. This bound is larger by a multiplicative factor of $2e$ than the simple lower bound obtained by a blow-up construction. Pippenger and Golumbic conjectured that the latter lower bound is essentially tight. In the present paper we establish a better upper bound of $(128e/81) \cdot (n/k)^k$. This constitutes the first progress towards proving the aforementioned conjecture since it was posed.
\end{abstract}

\maketitle

\section{Introduction}

A common theme in modern extremal combinatorics is the study of densities or induced densities of fixed objects (such as graphs, digraphs, hypergraphs, etc.) in large objects of the same type, possibly under certain restrictions. This general framework includes Tur\'an densities of graphs and hypergraphs, local profiles of graphs and their relation to quasi-randomness, and more. One such line of research was initiated by Pippenger and Golumbic~\cite{PG}. Given graphs $G$ and $H$, let $D_H(G)$ denote the number of induced subgraphs of $G$ that are isomorphic to $H$ and let $I_H(n) = \max\{D_H(G) : |G| = n\}$. A standard averaging argument was used in~\cite{PG} to show that the sequence $\{I_H(n)/\binom{n}{|H|}\}_{n=|H|}^{\infty}$ is monotone decreasing, and thus converges to a limit $\ind(H)$, the so-called \emph{inducibility} of $H$. 

Since it was first introduced in 1975, inducibility has been studied in many subsequent papers. Determining this invariant seems to be a very hard problem. To illustrate the current state of knowledge (or lack thereof), it is worthwhile to note that even the inducibility of paths of length at least $3$ and cycles of length at least $6$ are not known. Still, the inducibility of a handful of graphs and graph classes is known. These include various very small graphs (see, e.g.,~\cite{Baletal, EL, Hst}) and complete multipartite graphs (see, e.g.,~\cite{BEH, BNT, BS}). Additional recent results on inducibility can be found, e.g., in~\cite{HHN, Hu, MS}. Some of the recent progress in this area is due to Razborov's theory of flag algebras~\cite{Raz}, which provides a framework for systematic computer-aided study of questions of this type. 

While, trivially, the complete graph $H = K_k$ and its complement achieve the maximal possible inducibility of $1$, the natural analogous  question, which graphs on $k$ vertices \emph{minimise} the quantity $\ind(H)$, which has been asked in~\cite{PG}, is still open.

Let $H$ be an arbitrary graph on $k$ vertices, where $k$ is viewed as large but fixed. By considering a balanced blow-up of $H$ (and ignoring divisibility issues), it is easy to see that $\ind(H) \geq k!/k^k$. An iterated blow-up construction provides only a marginally better lower bound of $k!/(k^k-k)$. Pippenger and Golumbic~\cite{PG} conjectured that the latter is tight for cycles. 

\begin{conj} [\cite{PG}] \label{conj:exact}
$ind(C_k) = k!/(k^k - k)$ for every $k \geq 5$.
\end{conj}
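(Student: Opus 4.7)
The plan is to match the iterated blow-up lower bound by a refined counting argument. Let $f(n) = I_{C_k}(n)$. The target is a recursion of roughly the form
\[
f(n) \leq k \cdot f(\lceil n/k \rceil) + \tfrac{1}{2k} \cdot k! \cdot (n/k)^k (1 + o(1)),
\]
which on iteration sums a geometric series in the scale factor $k^{1-k}$ and yields exactly $\tfrac{k!}{k^k - k} \binom{n}{k}(1 + o(1))$. The characteristic denominator $1/(1 - k^{1-k})$ of the conjectured constant should emerge precisely as the limit of that geometric progression, mirroring the iterated blow-up construction described in the introduction.

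Concretely, I would first set up a local count: fix an edge $uv$ of $G$ and enumerate induced $C_k$'s through $uv$ as induced paths $u = x_0, x_1, \ldots, x_{k-1} = v$ with no chords among the $x_i$. Summing over edges and accounting for the $2k$ dihedral symmetries of $C_k$ yields a workable expression which, in a truly balanced graph, already gives the correct leading term. The second and critical step is a stability statement: any $G$ close to the extremal count should admit a partition $V_1, \ldots, V_k$ of sizes close to $n/k$ such that the bipartite graphs between consecutive parts are almost complete, those between non-consecutive parts are almost empty, and most induced $C_k$'s either use one vertex from each part cyclically (contributing at most $(n/k)^k$ cycles with the correct multiplicity) or live entirely inside some $V_i$. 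A convexity argument should pin the optimum at the balanced partition, and applying induction to each $G[V_i]$ would then close the recursion.

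The principal obstacle, and almost certainly the reason the conjecture has resisted attack for over forty years, is establishing the stability decomposition. Without flag-algebra input for general $k$, ruling out near-extremal graphs that distribute induced $C_k$'s in a non-blow-up manner appears very difficult: one needs fine quantitative control on how each ``wrong'' edge or non-edge relative to a candidate partition suppresses the $C_k$ count, uniformly in $k$. The improved upper bound $(128e/81)(n/k)^k$ announced in the abstract shows that the leading constant can be pushed without resolving the structural problem, essentially by sharpening the Pippenger--Golumbic averaging. I would therefore expect my proposal above to succeed, at best, in obtaining a further improved constant, and to fall short of the exact value $k!/(k^k-k)$ unless a genuinely new stability-type idea is injected at the structural step.
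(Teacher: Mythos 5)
This statement is not a theorem of the paper: it is the open Pippenger--Golumbic conjecture, which the paper explicitly does \emph{not} resolve. Theorem~\ref{thm:main} only improves the multiplicative constant in the upper bound from $2e$ to $128e/81$, and the authors take a route entirely different from yours: they fix a vertex $v$ of minimum degree, count induced $k$-cycles through $v$ (and through edges and cherries at $v$) via the exclusion-property lemma (Lemma~\ref{lem:excl}), normalise degrees and codegrees, and then solve a sequence of continuous optimisation problems (Lemmas~\ref{lem:program1}--\ref{lem:program3} and Claim~\ref{cl:programA}) to bound $D_k(G,v)$. No structural or stability decomposition of $G$ is attempted anywhere.

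Your sketch has two concrete problems. First, the constant in your proposed recursion is off: in the iterated blow-up with parts of size $n/k$, each choice of one vertex per part yields exactly one induced $k$-cycle, so the correct recursion is $f(n) \lesssim k\,f(n/k) + (n/k)^k$, which upon iteration gives $f(n) \approx n^k/(k^k-k) = \frac{k!}{k^k-k}\binom{n}{k}(1+o(1))$; your extra factor $\frac{k!}{2k} = (k-1)!/2$ would overshoot the conjectured value by that amount. Second, and more fundamentally, the entire argument hinges on the stability decomposition you describe in your second paragraph, and you have no proof of it. You correctly identify this as the real obstacle --- ruling out near-extremal graphs that do not resemble a balanced blow-up is precisely what has kept the conjecture open since 1975 (and it has been settled only for $k=5$, via flag algebras). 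As written, the proposal is an outline of a plausible but unexecuted strategy, not a proof, and your own concluding paragraph acknowledges this honestly. The correct framing is that the paper sidesteps the structural question entirely and pays for it with a non-optimal constant, whereas your plan would require solving the structural question head-on.
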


\noindent
Note that the requirement $k \geq 5$ appearing in Conjecture~\ref{conj:exact} is necessary. Indeed, $ind(C_3) = 1$ since $C_3 = K_3$ is a complete graph and, as shown in~\cite{PG}, $ind(C_4) = 3/8$ since $C_4 = K_{2,2}$ is a balanced complete bipartite graph. The authors of~\cite{PG} also posed the following asymptotic version of the above conjecture.  
\begin{conj} [\cite{PG}] \label{conj:asympt}
$ind(C_k) = (1+o(1)) k!/k^k$.
\end{conj}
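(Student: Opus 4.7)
The plan is to prove the matching upper bound $I_{C_k}(n) \leq (1+o_k(1))(n/k)^k$, since the lower bound $\ind(C_k) \geq k!/k^k$ is already supplied by the balanced blow-up construction of $C_k$ noted in the excerpt. The overall strategy I would pursue combines a sharpening of the Pippenger--Golumbic counting argument with a stability statement asserting that near-extremal graphs must look approximately like balanced blow-ups of $C_k$.

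My first attempt would be to revisit the Pippenger--Golumbic counting and isolate the sources of their factor $2e$. Their bound arises by counting induced $C_k$'s through a fixed vertex via products over a partition of its neighborhood and then summing over vertices. The factor $e$ traces back to the usual $(1-1/k)^{-k}$ slack in AM-GM, while the factor $2$ comes from double-counting. Applying entropy-type inequalities more carefully — for instance, Shearer's inequality applied to induced paths of length $k-1$, or a Loomis--Whitney style projection bound on cyclic sequences of vertices representing induced $k$-cycles — should cut these losses down whenever the graph is structurally diverse. The tension here is that the pointwise bound is tight precisely on blow-ups, so any improvement must be coupled to a quantitative deviation from the blow-up structure; the current paper's improvement to $(128e/81)(n/k)^k$ suggests that such a coupling is feasible but not yet sharp.

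This motivates a stability approach: prove that if $G$ on $n$ vertices contains at least $(1-\delta)(n/k)^k$ induced $C_k$'s, then $G$ is $o(n^2)$-close in edit distance to a balanced blow-up of $C_k$. Given such stability, one finishes by a direct perturbative calculation: moving a single vertex across blow-up classes, or flipping a single edge inside or outside a blow-up class, can be shown to strictly decrease the $C_k$-count to leading order, and the combinatorics of these perturbations can be controlled explicitly. The main obstacle is of course the stability statement itself, which appears at least as hard as the conjecture: flag algebra can in principle establish it for each fixed $k$, but obtaining bounds uniform in $k$ would likely require either an induced removal lemma tailored to $C_k$, or an iterative cleaning procedure that successively peels off a near-perfect blow-up partition from $G$ while losing only a controlled number of induced $k$-cycles at each stage. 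Closing this gap uniformly in $k$ is where I expect the entire difficulty to be concentrated.
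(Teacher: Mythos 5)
This statement is an open conjecture of Pippenger and Golumbic; the paper you are reading does not prove it, and indeed the paper's Theorem~\ref{thm:main} only establishes the weaker bound $\ind(C_k) \leq (128e/81)\,k!/k^k$. So there is no ``paper proof'' for your argument to match --- the correct baseline for evaluating your proposal is whether it closes the conjecture, and by your own account it does not.

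Your plan is internally consistent as a \emph{strategy}, but it has a genuine and acknowledged gap: the stability lemma (``near-extremal graphs are $o(n^2)$-close in edit distance to a balanced blow-up of $C_k$'') is not proved and, as you note, appears to carry essentially the full difficulty of the conjecture. Citing flag algebras for each fixed $k$ does not help here, because the conjecture is a statement for all $k$ and flag-algebra computations do not obviously yield bounds uniform in $k$; and the ``induced removal lemma'' or ``iterative cleaning'' routes you mention are wishes rather than arguments. Even granting stability, the perturbative finishing step is not as routine as you suggest: one must show that the blow-up is a strict local maximiser of $D_{C_k}$ in a quantitatively strong enough sense to absorb the $o(n^2)$-edit slack, and one must account for the fact that iterated blow-ups (which are far in edit distance from a single balanced blow-up on their lower levels) also achieve the extremal density up to lower-order terms, so the stability target would need to be a blow-up of $C_k$ at the top level only, with the parts recursively structured --- a subtler statement than the one you write down.

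For contrast, the route actually taken in the paper to make partial progress is quite different and does not go through stability at all. It fixes a minimum-degree vertex $v$, counts induced $k$-cycles through $v$ by building them one vertex at a time while carefully tracking degrees, co-degrees $x_{uv}, x_{vw}, x_{uw}$, and the triple co-degree $z_{uvw}$ (Section~\ref{sec::main}), applies the Pippenger--Golumbic exclusion lemma (Lemma~\ref{lem:excl}) to bound the tail of the path, and then reduces the problem to an explicit multivariate optimisation (Lemmas~\ref{lem:program1}--\ref{lem:program3} and Claim~\ref{cl:programA}). The Zykov symmetrisation argument (Lemma~\ref{lem:homogen}) then transfers the vertex-local bound to a global one. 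This trades the hard structural stability question for a tractable but lossy analytic optimisation, which is exactly why it lands at $128e/81$ rather than $1$. If you want to pursue the asymptotic conjecture itself, the honest status is that neither your stability route nor the paper's local-optimisation route currently closes the gap, and the missing ingredient in your sketch needs to be supplied before this can be called a proof.
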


In support of Conjecture~\ref{conj:asympt}, it was shown in~\cite{PG} that $I_{C_k}(n) \leq \frac{2n}{k} \left(\frac{n-1}{k-1} \right)^{k-1}$ holds for every $k \geq 4$. This implies that $ind(C_k) \leq 2e \cdot k!/k^k$, leaving a multiplicative gap of $2e$ (which is approximately $5.4366$) between the known upper and lower bounds. In this paper we partially bridge the above gap by proving a better upper bound on the inducibility of $C_k$, namely $ind(C_k) \leq (128/81)e\cdot k!/k^k$ (note that $(128/81)e$ is approximately $4.2955$).

\begin{theorem} \label{thm:main}
For every $k\geq 6$ we have
$$
\ind(C_k) \leq \frac{128e}{81} \cdot \frac{k!}{k^k}.
$$ 
\end{theorem}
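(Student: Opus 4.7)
The natural starting point is to sharpen the Pippenger--Golumbic counting argument. Their bound can be recast as a per-vertex estimate $I(v)\leq 2\bigl(\tfrac{n-1}{k-1}\bigr)^{k-1}$, where $I(v)$ denotes the number of induced copies of $C_k$ containing $v$, combined with the identity $\sum_v I(v) = k\cdot D_{C_k}(G)$. Since the new constant $128e/81$ represents exactly a factor of $\tfrac{64}{81}$ improvement over the old one, my plan is to sharpen the per-vertex bound by this factor.

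Carrying this out, I would fix $v \in V(G)$ and decompose each induced $C_k$ through $v$ as an unordered pair $\{u,w\}\subseteq N(v)$ with $uw\notin E(G)$, together with an induced $P_{k-2}$ from $u$ to $w$ whose internal vertices avoid $N(v)\cup\{v\}$. I would bound the number of such paths by splitting each at an intermediate position, counting extensions on the two sides separately via degree-type inequalities, and recombining the halves with Cauchy--Schwarz or H\"older. An AM--GM/convexity pass over the resulting degree sequence, followed by a final parametric optimisation, should produce an inequality of the form $I(v) \leq \tfrac{128}{81}\bigl(\tfrac{n-1}{k-1}\bigr)^{k-1}$, after which summing over $v$ and invoking $\bigl(\tfrac{k}{k-1}\bigr)^{k-1}\to e$ delivers the theorem.

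The principal obstacle is extracting precisely the constant $64/81$. Its factorisation $64/81 = 4\cdot(2/3)^4$ strongly suggests that the underlying optimisation reaches its maximum at a parameter value of $2/3$, rather than the symmetric $1/2$ one would expect from a naive middle-splitting. A plausible candidate for this parameter is the fraction of $N(v)\cup\{v\}$ that is ``used up'' by the two path-endpoints versus their forbidden non-neighbours, or a local edge density controlling how the $k-2$ internal vertices of the path distribute into the neighbourhoods of $u$ and $w$. Pinpointing the correct parametric inequality---one whose optimum truly lies at $2/3$ and whose value equals $4\cdot(2/3)^4$---is where the delicate work must happen. Any coarser refinement of Pippenger--Golumbic is likely to improve the constant but fall short of $128e/81$ without this insight; an alternative would be an induction on $k$ with a strengthened hypothesis that builds in the factor $128/81$ from the outset, though discovering such a hypothesis typically requires experimentation on small values of $k$.
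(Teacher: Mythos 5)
Your opening move---decomposing each induced $C_k$ through $v$ into a non-adjacent pair $\{u,w\}\subseteq N(v)$ plus an induced path, and working with the quantity $D_k(G,u,v,w)$---is indeed where the paper starts, and your instinct that the constant must arise from a parametric optimisation is sound. But the route you sketch diverges from what actually works, and at two crucial points the plan as stated would not go through.

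First, you aim for a per-vertex bound $D_k(G,v)\leq \frac{128}{81}\bigl(\frac{n-1}{k-1}\bigr)^{k-1}$ valid for every $v$, to be summed over $v$. The paper instead proves this only for a vertex $v$ of \emph{minimum degree}, and then invokes a homogeneity lemma (proved by Zykov symmetrisation: in an extremal $G$, $D_k(G,v)$ is the same for all $v$ up to $O(n^{k-2})$) to transfer the bound to all vertices. This reduction is not cosmetic: the minimum-degree assumption forces $c_u=kd_u/n\geq c$ and $c_w\geq c$ for all neighbours $u,w$ of $v$, and these lower bounds are exactly what pins down the optimisation. Without them, the per-vertex bound you state is not obviously true and is not what the argument establishes directly.

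Second, the idea of ``splitting the path at an intermediate position and recombining with Cauchy--Schwarz/H\"older'' is not how the improvement is obtained, and it is doubtful it could yield the sharp constant. What the paper does is \emph{local}: after fixing $v,u,w$ it places the next vertex on each side explicitly (a neighbour of $u$ outside $N(v)\cup N(w)$, of which there are $d_u-x_{uv}-x_{uw}+z_{uvw}$; similarly for $w$), and only then applies the Pippenger--Golumbic exclusion lemma to the remaining $k-5$ positions over the ground set $V\setminus(N(u)\cup N(v)\cup N(w))$. The gain comes from the inclusion--exclusion in the co-degree terms $x_{uv},x_{vw},x_{uw},z_{uvw}$, not from a middle-split. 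The convexity tool that then closes the argument is Jensen's inequality applied to the concavity of $f(x)=xe^{-x}$ on $[1,2]$, averaging the normalised ``free degree'' $z_w=c-\bar{x}_{vw}$ over the $\overline{G}$-neighbourhood of each $u$ in $N_G(v)$; this reduces the problem to maximising $\sum z_i^2 y_i e^{-z_i-y_i}$ subject to $\sum z_i^2=\sum y_iz_i$ and $1\leq y_i,z_i\leq c\leq 2$. Finally, your guess that the critical parameter is $2/3$ is close but off by a factor of two: the optimisation lands on $z=4/3$, and $\frac12\cdot(4/3)^4\cdot e^{5-4}=\frac{128e}{81}$. Getting to that point requires several preliminary reductions (showing one can take $x^{uw}=\bar z_{uvw}$, and that the set $L_G$ of neighbours with large co-degree with $v$ is empty, via edge-switching arguments) which your sketch does not anticipate. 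So while your starting frame is right, the proposal as written has genuine gaps at exactly the places you flag as ``where the delicate work must happen.''
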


We note that the case $k=5$ of Conjecture~\ref{conj:exact} was settled by Balogh, Hu, Lidick\'{y} and Pfender~\cite{Baletal}, who showed, in particular, that if $n$ is a power of $5$, then $I_{C_5}(n)$ is uniquely attained by the iterated blow-up of $C_5$. The proof which was given in~\cite{Baletal} combines flag algebras~\cite{Raz} and stability methods. It is also worth noting that, in \emph{triangle-free} graphs, all pentagons are induced. Maximising the number of pentagons in triangle-free graphs is an old problem of Erd\H{o}s~\cite{ErdC5}, which was solved recently, using flag algebras, by Grzesik~\cite{Gr} and independently by Hatami, Hladk\'{y}, Kr\'{a}l', Norine and Razborov~\cite{Many} (prior to the use of flag algebras, the best result was due to Gy\H{o}ri~\cite{Gy} who gave an elegant elementary proof of a slightly weaker bound). 

\medskip

The rest of this paper is organized as follows. In the next section we collect some basic properties of graphs which maximise the number of induced $k$-cycles, and recall the proof of the bound in~\cite{PG}. In Section~\ref{sec::largeMinDegree} we prove Theorem~\ref{thm:main} for graphs with large minimum degree. Section~\ref{sec::main} constitutes the main part of our proof of Theorem~\ref{thm:main}. In order to improve the presentation of the paper, the proof of Claim~\ref{cl:programA} is postponed to the appendix.
 
\section{Preliminaries} \label{sec:prelim}

In this section we establish a number of lemmas which will pave the way to the proof of our main result later on. In particular, we will present a slightly modified version of the proof of the upper bound on $I_{C_k}(n)$ from~\cite{PG}. We begin by introducing some notation and terminology which will be used throughout the paper. Some of this notation is standard and can be found, e.g., in~\cite{Byellow}. For a positive integer $n$, we denote by $[n]$ the set $\{1, \ldots, n\}$. 
For a graph $G = (V,E)$, let $\overline{G} = (V, \overline{E})$, where $\overline{E} = \{xy : x, y \in V, x \neq y, xy \notin E\}$, denote the complement of $G$. For a set $S \subseteq V$, let $G[S]$ denote the subgraph of $G$ induced by $S$. For a vertex $v \in V$, let $N_G(v) = \{w \in V : vw \in E\}$ denote the neighbourhood of $v$ and let $d_v = |N_G(v)|$ denote the degree of $v$. The minimum degree of $G$, denoted by $\delta(G)$, is $\min \{d_v : v \in V\}$. As a less standard piece of notation, let $x_{uw} = |N_G(u) \cap N_G(w)|$ denote the co-degree of two vertices $u, w \in V$ and let $z_{uvw} = |N_G(u) \cap N_G(v) \cap N_G(w)|$ denote the co-degree of three vertices $u, v, w \in V$. For graphs $H$ and $G$, and vertices $v_1, \ldots, v_{\ell} \in V(G)$, let $D_H(G, v_1, \ldots, v_{\ell})$ denote the number of (unlabeled) induced copies of $H$ in $G$ containing $v_1, \ldots, v_{\ell}$. To simplify notation, we abbreviate $D_{C_k}$ to $D_k$.

Throughout this paper we reserve the letter $f$ to denote the function $f(x) = x e^{-x}$ on the domain $[0,\infty)$. The following basic analytic properties of $f$ will be used repeatedly in our proofs. 
\begin{obs} \label{obs:fx}
$f(x)$ is monotone increasing on $[0,1]$ and monotone decreasing on $[1,\infty)$. Consequently, $f(x)$ attains its global maximum at $x=1$. Moreover, $f(x)$ is concave on $[1,2]$. 
\end{obs}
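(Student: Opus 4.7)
The statement is a standard calculus exercise about the function $f(x) = xe^{-x}$, so the plan is simply to differentiate twice and read off the sign information.

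First I would compute $f'(x) = (1-x)e^{-x}$. Since $e^{-x} > 0$ for all $x$, the sign of $f'(x)$ is controlled by the factor $1-x$: we have $f'(x) > 0$ on $[0,1)$, $f'(1) = 0$, and $f'(x) < 0$ on $(1,\infty)$. This immediately yields the first assertion that $f$ is strictly increasing on $[0,1]$ and strictly decreasing on $[1,\infty)$, and hence the second assertion that the global maximum on $[0,\infty)$ is attained uniquely at $x = 1$.

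For the concavity statement I would differentiate once more, obtaining $f''(x) = -e^{-x} - (1-x)e^{-x} = (x-2)e^{-x}$. On the interval $[1,2]$ the factor $x - 2$ is non-positive, so $f''(x) \leq 0$ throughout $[1,2]$, which gives concavity of $f$ on this interval.

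There is no real obstacle here; the only thing to be slightly careful about is presenting the signs of $f'$ and $f''$ cleanly enough that each of the three conclusions of the observation is visibly justified.
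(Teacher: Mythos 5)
Your computation is correct: $f'(x)=(1-x)e^{-x}$ and $f''(x)=(x-2)e^{-x}$, and the sign analysis gives all three claims. The paper states this observation without proof, and your argument is the standard one any reader would supply.
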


Next, we state a lemma which is implicit in~\cite{PG} and provides a counting principle which will play a crucial role in the proof of Theorem~\ref{thm:main}. Since the statement of the lemma is somewhat technical, we first explain informally what it will be used for. Suppose that we wish to count induced $k$-cycles in some graph $G$. The way we do this (following~\cite{PG}) is by building a copy of $C_k$ vertex by vertex. Suppose we have already built a path $u_1, u_2, \ldots, u_r$ and now wish to extend it by adding another vertex $u_{r+1}$ (assume $r+1 < k$). This vertex should be a neighbour of $u_r$ but should not be adjacent to $u_i$ for any $1 \leq i \leq r-1$. That is, when choosing $u_{r+1}$, we consider the neighbours of $u_r$ but exclude from this set all the vertices that were ``considered'' before (neighbours of $u_1$, neighbours of $u_2$, etc.). We aim to establish an upper bound on the number of induced $C_k$'s we will be able to build via this process.  

In order to state the lemma formally, we will need some additional notation and terminology. Suppose that, for some positive integers $n$ and $k$, we have a family of injective functions $\Pp \subseteq \{p : [k] \rightarrow [n]\}$. Let $A_0 = \emptyset$ and, for every $1 \leq i < k$, let $A_i = \{(a_1, \ldots, a_i) \in [n]^i : a_r \neq a_s \textrm{ for every } 1 \leq r < s \leq i\}$. For every $1 \leq i < k$ and every $\bar{a} = (a_1, \ldots, a_i) \in A_i$, let $\Pp_{\bar{a}} = \{p \in \Pp : p(1) = a_1, \ldots, p(i) = a_i\}$, let $X_{\bar{a}}=X_{\bar{a}}(\Pp) = \{m \in [n]: m \neq p(j+1) \textrm{ for every } 0 \leq j \leq i-1 \textrm{ and every } p \in \Pp_{(a_1, \ldots, a_j)}\}$, and let $\mathcal{Q}_{\bar{a}} =\mathcal{Q}_{\bar{a}}(\Pp) = \{p \in [n]^k : p(j) \in X_{\bar{a}} \textrm{ for every } i < j \leq k\}$.

\begin{lem} [\cite{PG}] \label{lem:excl}
For positive integers $k$ and $n$, let $\Pp \subseteq \{p : [k] \rightarrow [n]\}$ be a family of injective functions. Suppose that $\Pp$ satisfies the following `exclusion property': for every $1 \leq i < k$ and every $\bar{a} = (a_1, \ldots, a_i) \in A_i$ we have $\Pp_{\bar{a}} \subseteq \mathcal{Q}_{\bar{a}}$. Then $|\Pp| \leq (n/k)^k$.
\end{lem}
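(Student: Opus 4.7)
My plan is to set up a tree-counting induction on $k$. For each prefix $\bar{a} = (a_1, \ldots, a_i) \in A_i$ with $\mathcal{P}_{\bar{a}} \neq \emptyset$, let me define the \emph{branching factor} $c(\bar{a}) = |\{p(i+1) : p \in \mathcal{P}_{\bar{a}}\}|$. Under this definition, $\mathcal{P}$ corresponds exactly to the leaves of a rooted tree of depth $k$ in which the node $\bar{a}$ has $c(\bar{a})$ children, and bounding $|\mathcal{P}|$ amounts to counting these leaves.

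The key ingredient, which is where the exclusion property will be used, is the claim that along every root-to-leaf path $(), (p(1)), \ldots, (p(1), \ldots, p(k-1))$, the branching factors satisfy
\begin{equation*}
c(()) + c((p(1))) + \cdots + c((p(1), \ldots, p(k-1))) \leq n.
\end{equation*}
To establish this, I would apply the exclusion property at $\bar{a} = (p(1), \ldots, p(k-1))$ to deduce $c(\bar{a}) \leq |X_{\bar{a}}|$, and then expand $|X_{\bar{a}}|$. The only real work is to verify that the $k-1$ exclusion sets $\{q(j+1) : q \in \mathcal{P}_{(p(1), \ldots, p(j))}\}$ for $0 \leq j \leq k-2$ appearing in the definition of $X_{\bar{a}}$ are pairwise disjoint, so that $|X_{\bar{a}}| = n - \sum_{j=0}^{k-2} c((p(1), \ldots, p(j)))$. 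This disjointness is itself a direct consequence of the exclusion property: if $j < j'$, any element of the $j'$-th set has the form $q(j'+1)$ for some $q \in \mathcal{P}_{(p(1), \ldots, p(j'))}$, and the exclusion property applied at this prefix places $q(j'+1)$ in $X_{(p(1), \ldots, p(j'))}$, which in particular avoids the $j$-th set.

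The remainder is a clean tree inequality: any rooted tree of depth $k$ in which the branching factors along every root-to-leaf path sum to at most $n$ has at most $(n/k)^k$ leaves. I would prove this by induction on $k$; the base case $k = 1$ is immediate. For the inductive step, if $c_0 = c(())$ is the root's branching factor, then each of its $c_0$ subtrees satisfies the hypothesis with $n$ replaced by $n - c_0$ and $k$ replaced by $k - 1$, so by induction has at most $((n-c_0)/(k-1))^{k-1}$ leaves. This gives $|\mathcal{P}| \leq c_0 \cdot ((n-c_0)/(k-1))^{k-1} \leq (n/k)^k$ by a one-line AM-GM applied to $c_0$ together with $k-1$ copies of $(n-c_0)/(k-1)$, whose sum is exactly $n$. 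I expect the main subtlety to be the disjointness of the exclusion sets, since $X_{\bar{a}}$ is defined in terms of \emph{all} shorter prefixes of $\bar{a}$ and one must check that these contributions combine additively rather than with overlap; once this is in place, the induction and AM-GM calculation are entirely routine.
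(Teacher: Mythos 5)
Your proof is correct. The paper itself does not reprove Lemma~\ref{lem:excl}; it cites~\cite{PG} for it, so there is no in-text argument to compare against. Your tree formulation with branching factors $c(\bar a)$, the verification via the exclusion property that the exclusion sets $\{q(j+1):q\in\Pp_{(p(1),\ldots,p(j))}\}$ along a fixed root-to-leaf path are pairwise disjoint (hence that the branching factors along each such path sum to at most $n$), and the induction on $k$ closed by AM--GM applied to $c_0$ together with $k-1$ copies of $(n-c_0)/(k-1)$, is exactly the argument of Pippenger and Golumbic. One small bookkeeping point worth being explicit about: the disjointness claim for $j<j'$ needs $j'\geq 1$ so that the exclusion property is applicable at the prefix $(p(1),\ldots,p(j'))$; this is automatic since $j\geq 0$, but it is the reason the empty prefix does not need to be covered by the exclusion hypothesis.
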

%
\noindent
Let us now give a slightly modified proof of the upper bound on $I_{C_k}(n)$ from~\cite{PG}.
\begin{lem} [\cite{PG}] \label{lem:ub2e}
Let $n \geq k \geq 4$ be integers, then 
$$
I_{C_k}(n) \leq 2 e \frac{n^k}{k^k}.
$$
\end{lem}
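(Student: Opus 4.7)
The plan is to follow Pippenger--Golumbic's strategy: count induced $k$-cycles by enumerating their ordered vertex labelings and apply Lemma~\ref{lem:excl} to bound the count. Each induced $k$-cycle admits $2k$ ordered labelings $(v_1, v_2, \ldots, v_k)$ that trace it cyclically (one for each of $k$ starting vertices and one of two directions), so the total number of such labelings across all induced $k$-cycles in $G$ equals $2k \cdot I_{C_k}(n)$, and for each $v \in V(G)$ the labelings with $v_1 = v$ number exactly $2 D_k(G, v)$.

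For each fixed $v \in V(G)$, I would try to apply Lemma~\ref{lem:excl} to the family $\mathcal{P}_v$ of sequences $(v_2, v_3, \ldots, v_k) \in (V(G) \setminus \{v\})^{k-1}$ for which $(v, v_2, \ldots, v_k)$ labels an induced $C_k$; note $|\mathcal{P}_v| = 2 D_k(G, v)$ and $\sum_v |\mathcal{P}_v| = 2k \cdot I_{C_k}(n)$. The main obstacle is that $\mathcal{P}_v$ does not directly satisfy the exclusion property: the cycle-closing edge forces the last vertex $v_k$ to lie in $N(v)$, but $N(v)$ is precisely the set of possible choices at the first step (for $v_2$), so $v_k$ is already excluded from the set $X_{(v_2)}$ that the exclusion property demands it belong to.

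To circumvent this, I would enlarge $\mathcal{P}_v$ to a family $\widetilde{\mathcal{P}}_v$ that includes, alongside the cycle labelings, certain path-like labelings (e.g., sequences $(v_2, \ldots, v_k)$ for which $(v, v_2, \ldots, v_k)$ traces an induced $P_k$ with $v v_k \notin E$). For this enlarged family the constraint forcing $v_k \in N(v)$ is relaxed, so the induced-path non-adjacency conditions align with the exclusion requirement and the exclusion property can be verified directly. Applying Lemma~\ref{lem:excl} then yields $|\widetilde{\mathcal{P}}_v| \leq \left(\frac{n-1}{k-1}\right)^{k-1}$. Summing over $v$, and using a controlled bound $|\mathcal{P}_v| \leq c \cdot |\widetilde{\mathcal{P}}_v|$ for some modest constant $c$, we get $2k \cdot I_{C_k}(n) \leq c \cdot n \cdot \left(\frac{n-1}{k-1}\right)^{k-1}$. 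The standard estimate $\left(1 + \frac{1}{k-1}\right)^{k-1} \leq e$ then turns this into the claimed $I_{C_k}(n) \leq 2e \cdot n^k/k^k$ for an appropriate choice of $c$.

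The hardest step will be the precise construction of $\widetilde{\mathcal{P}}_v$ and the verification of the exclusion property: one must check that, for every prefix of every element of $\widetilde{\mathcal{P}}_v$, no later vertex of a valid continuation appears among the possible choices at an earlier step. This is primarily a combinatorial bookkeeping task, and I expect the constant $2e$ to arise naturally as the product of the $e$ from $\left(1+\frac{1}{k-1}\right)^{k-1} \leq e$ and a multiplicative factor of $2$ coming from the ratio between $|\mathcal{P}_v|$ and $|\widetilde{\mathcal{P}}_v|$. No conceptually new tool beyond Lemma~\ref{lem:excl} itself is expected to be required.
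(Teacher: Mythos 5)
Your plan does not work as stated. You correctly identify the obstacle: for a cyclic labeling $(v_2,\ldots,v_k)$ of an induced $C_k$ through $v$, the closing vertex $v_k$ is a neighbour of $v$ and hence a possible value of $v_2$, so $v_k \notin X_{(v_2)}$ and the exclusion property of Lemma~\ref{lem:excl} fails. But enlarging $\mathcal{P}_v$ to a superset $\widetilde{\mathcal{P}}_v$ cannot repair this: the exclusion property must hold for \emph{every} element of the family, and the offending cyclic labelings are still present. In fact, since $X_{\bar{a}}$ is defined relative to $\Pp$, adding more sequences can only shrink $X_{\bar{a}}$, making the property harder, not easier, to satisfy. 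If instead you intend $\widetilde{\mathcal{P}}_v$ to consist of induced-path labelings only (for which the exclusion property does hold, giving $|\widetilde{\mathcal{P}}_v| \leq ((n-1)/(k-1))^{k-1}$), then the crucial step $|\mathcal{P}_v| \leq c\,|\widetilde{\mathcal{P}}_v|$ for a universal constant $c$ is simply false: in $G = C_k$ one has $|\mathcal{P}_v| = 2$ and $|\widetilde{\mathcal{P}}_v| = 0$, since $C_k$ contains no induced $P_k$.

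The paper circumvents the obstacle in a different way: it labels the vertices of $C_k$ by $1, 2, 4, 5, \ldots, k, 3$ along the cycle, so that \emph{both} neighbours of the seed $\phi(1)=v$ (the vertices labelled $2$ and $3$) are chosen first, in at most $d_v^2$ ways, and only the interior path $\phi(4),\ldots,\phi(k)$ is controlled by Lemma~\ref{lem:excl}, on the ground set $V(G)\setminus(\{v\}\cup N_G(v))$ of size $n-d_v-1$. This gives $D_k(G,v) \leq \tfrac12 d_v^2\bigl(\tfrac{n-d_v-1}{k-3}\bigr)^{k-3}$, after which an optimisation over $d_v$ — a step your sketch skips entirely — is required; the maximum at $d_v = 2n/(k-1)$ yields $2(n/(k-1))^{k-1} \leq 2e(n/k)^{k-1}$. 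Note that both the $2$ and the $e$ arise from this calculus optimisation, not from a path-versus-cycle ratio as your proposal speculates.
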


\begin{proof}
Let $G$ be an arbitrary graph on $n$ vertices and let $v \in V(G)$ be an arbitrary vertex. We will use Lemma~\ref{lem:excl} in order to bound $D_k(G,v)$ from above as follows. Label the vertices of $C_k$ by $1, 2, 4, \ldots, k, 3$ along the cycle (note the unusual order). By counting all labeled induced embeddings $\phi$ of $C_k$ (labeled as above) into $G$, subject to $\phi(1) = v$, we obtain twice the number of induced $k$-cycles containing $v$, as each of them will be counted once for each `direction'. We have at most $d_v^2$ choices for the images of $2$ and $3$. Since $\phi$ maps $C_k$ to an induced $k$-cycle of $G$, for any choice of $\phi(2)$ and $\phi(3)$ the choices for $\phi(i)$ where $4 \leq i \leq k$ form a family which satisfies the exclusion property on the ground set $V(G) \setminus (\{v\} \cup N_G(v))$ of size $n - d_v - 1$. Applying Lemma~\ref{lem:excl} then yields 
\begin{align} \label{eq:PGvertex}
D_k(G,v) \leq \frac{1}{2} d_v^2 \left(\frac{n - d_v - 1}{k-3}\right)^{k-3} \leq \frac{1}{2} d_v^2 \left(\frac{n - d_v}{k-3}\right)^{k-3}.
\end{align}

\noindent
Standard calculations show that the last expression is maximised at $d_v = 2n/(k-1)$. Hence
\begin{align} \label{eq:PGcalculus}
D_k(G,v) \leq 2 \left(\frac{n}{k-1}\right)^{k-1} \leq 2 e  \frac{n^{k-1}}{k^{k-1}}.
\end{align}
Since $v$ was arbitrary, it follows that 
$$
D_k(G) = \frac{1}{k} \sum_{v \in V(G)} D_k(G,v) \leq 2 e \frac{n^k}{k^k}.
$$  
Finally, since $G$ was arbitrary, we conclude that 
$$
I_{C_k}(n) \leq 2 e \frac{n^k}{k^k}.
$$
\end{proof}

\medskip

Our next lemma asserts that in a graph $G$ which maximises $D_k$ over all $n$-vertex graphs every vertex is contained in approximately the same number of induced $k$-cycles.

\begin{lem} \label{lem:homogen}
Let $k \geq 5$ be an integer, let $G$ be a graph on $n$ vertices which maximises $D_k$ and suppose that $D_{k}(G) = \alpha n^k/k^k$. Then, for every $v \in V(G)$, we have 
$$
D_k(G,v) = \alpha \frac{n^{k-1}}{k^{k-1}} + O(n^{k-2}).
$$
\end{lem}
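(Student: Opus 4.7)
The plan is to use a vertex-substitution (cloning) argument. For any two vertices $u, v \in V(G)$, form a new graph $G'$ on $n$ vertices by deleting $v$ and adding a new vertex $v'$ whose neighbourhood is set to be $N_G(u) \setminus \{v\}$; in particular $v'$ is made \emph{non-adjacent} to $u$, so that $u$ and $v'$ become non-adjacent twins in $G'$. Since $G' \setminus \{v'\} = G \setminus \{v\}$, we have
$$
D_k(G') \;=\; D_k(G) - D_k(G,v) + D_k(G',v').
$$
By the maximality of $G$ among $n$-vertex graphs, $D_k(G') \leq D_k(G)$, so once $D_k(G',v')$ is related to $D_k(G,u)$ up to a lower-order error, we will obtain $D_k(G,v) \geq D_k(G,u) - O(n^{k-2})$, and the other direction will follow by swapping the roles of $u$ and $v$.

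\textbf{Key step.} The relationship between $D_k(G',v')$ and $D_k(G,u)$ is established by a simple bijection. Because the only edges that differ between $G$ and $G'$ are those incident to $v$ or to $v'$, and because $N_{G'}(v') \setminus \{u\} = N_G(u) \setminus \{v\}$, the map obtained by replacing $v'$ with $u$ is a bijection between induced $k$-cycles in $G'$ containing $v'$ but not $u$, and induced $k$-cycles in $G$ containing $u$ but not $v$. The remaining induced $k$-cycles counted by each of $D_k(G',v')$ and $D_k(G,u)$ use \emph{both} $u$ and $v'$ (respectively, both $u$ and $v$); trivially, the number of induced $k$-cycles in any $n$-vertex graph through two specified vertices is at most $k^2 (n-2)^{k-2}/2 = O(n^{k-2})$, since one chooses positions of the two vertices on the cycle and then the remaining $k-2$ vertices in order. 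Hence
$$
D_k(G',v') \;=\; D_k(G,u) + O(n^{k-2}).
$$

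\textbf{Conclusion.} Plugging this into the first display and using $D_k(G') \leq D_k(G)$ gives $D_k(G,v) \geq D_k(G,u) - O(n^{k-2})$, and by symmetry $|D_k(G,u) - D_k(G,v)| = O(n^{k-2})$ for every pair $u,v \in V(G)$. The double-counting identity
$$
\sum_{v \in V(G)} D_k(G,v) \;=\; k \cdot D_k(G) \;=\; \alpha \, \frac{n^k}{k^{k-1}}
$$
shows the average value of $D_k(G,v)$ is $\alpha n^{k-1}/k^{k-1}$, so every individual value lies within $O(n^{k-2})$ of this average, as claimed. The only mild subtlety in the argument is ensuring that the ``both $u$ and $v'$'' case is genuinely lower order: this is exactly why we define $v'$ to be non-adjacent to $u$, so that $u, v'$ do not interact as a single twin class but rather appear as two distinct vertices on the rare cycles that use both of them.
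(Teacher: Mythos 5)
Your proposal is correct and is essentially the paper's own argument: Zykov symmetrisation (delete one vertex and insert a non-adjacent twin of another), followed by the averaging identity $\sum_v D_k(G,v) = k D_k(G)$. The paper applies the swap only to the extremal pair $(v^+,v^-)$ while you do it for an arbitrary pair, and your remark about the ``both $u$ and $v'$'' case is more cautious than necessary (for $k\geq 5$ an induced $k$-cycle can never contain two non-adjacent twins, so that count is in fact zero, which is how the paper states it), but neither difference changes the substance.
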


\begin{proof}
Since $\sum_{v \in V(G)} D_k(G,v)$ counts each induced $k$-cycle in $G$ precisely $k$ times, it follows that 
$$
\frac{1}{n} \sum_{v \in V(G)} D_k(G,v) = \frac{k}{n} D_k(G) = \alpha \frac{n^{k-1}}{k^{k-1}}.
$$   
Let $v^+$ be a vertex of $G$ which is contained in the largest number of induced $k$-cycles, and let $v^-$ be a vertex of $G$ which is contained in the smallest number of induced $k$-cycles. Let $G'$ be obtained from $G$ by Zykov's symmetrisation~\cite{Zy}, i.e., remove $v^-$ and add a twin of $v^+$ instead (the two copies of $v^+$ in $G'$ are not connected by an edge). Then 
$$
D_k(G) \geq D_k(G') = D_k(G) - D_k(G,v^-) + D_k(G,v^+) - D_k(G,v^-,v^+),
$$ 
where the inequality above follows from our assumption that $G$ maximises $D_k$ and the equality holds since, for $k \geq 5$, no induced $k$-cycle in $G$ can contain both copies of $v^+$. Therefore 
$$
D_k(G,v^+) - D_k(G,v^-) \leq D_k(G,v^-,v^+) \leq \binom{n}{k-2}.
$$
Hence
$$
D_k(G,v^+) \leq \alpha \frac{n^{k-1}}{k^{k-1}} + O(n^{k-2}) \;\;\; \textrm{ and } \;\;\; D_k(G,v^-) \geq \alpha \frac{n^{k-1}}{k^{k-1}} - O(n^{k-2}).
$$
\end{proof}

By Lemma~\ref{lem:homogen}, in order to prove Theorem~\ref{thm:main} it suffices to show that every $n$-vertex graph $G$ has some vertex $v$ such that $D_k(G,v) \leq (128/81) e \cdot (n/k)^{k-1}$. As noted above, the maximum of~\eqref{eq:PGvertex} is attained when $d_v = 2n/(k-1)$. We will need the following lemma, which states that $D_k(G,v) \leq (128/81) e \cdot (n/k)^{k-1}$ holds if $d_v$ is sufficiently far from $2n/(k-1)$. For every vertex $u \in V(G)$ let $c_u = k d_u/n$.

\begin{lemma} \label{lem:rangec}
Let $G$ be a graph on $n$ vertices and let $u \in V(G)$. If $c_u \leq 1$ or $c_u \geq 4$, then 
$$
D_k(G,u) \leq \frac{128 e}{81} \cdot \left(\frac{n}{k}\right)^{k-1}.
$$ 
\end{lemma}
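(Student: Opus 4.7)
The plan is to apply the bound from the proof of Lemma~\ref{lem:ub2e}, specifically inequality~\eqref{eq:PGvertex}:
$$D_k(G,u) \leq \frac{1}{2} d_u^2 \left( \frac{n - d_u}{k-3} \right)^{k-3},$$
and then analyze its right-hand side as a function of $c_u = k d_u/n$. First I would substitute $d_u = c_u n/k$ and factor out $(n/k)^{k-1}$ to obtain
$$D_k(G,u) \leq \left(\frac{n}{k}\right)^{k-1} g(c_u), \qquad \textrm{where} \qquad g(c) := \frac{c^2}{2}\left(\frac{k-c}{k-3}\right)^{k-3}.$$
Since $c_u \in [0,k)$ always, it suffices to prove that $g(c) \leq 128e/81$ for every $c \in [0,1] \cup [4,k]$.

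The second step is a short calculus analysis of $g$ on $[0,k]$. A direct computation gives
$$g'(c) = c\left(\frac{k-c}{k-3}\right)^{k-4}\!\left(\frac{k-c}{k-3} - \frac{c}{2}\right),$$
so the unique critical point of $g$ in $(0,k)$ is $c^\ast = 2k/(k-1)$, and $g$ is increasing on $[0,c^\ast]$ and decreasing on $[c^\ast, k]$. Since $1 < c^\ast < 4$ for every $k \geq 3$, the supremum of $g$ over $[0,1]\cup[4,k]$ is attained at one of the two endpoints $c = 1$ or $c = 4$.

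Finally, I would use the standard estimate $(1+x)^t \leq e^{xt}$ (equivalently $\ln(1+x) \leq x$) to bound
$$g(1) = \frac{1}{2}\left(1+\frac{2}{k-3}\right)^{k-3} \leq \frac{e^2}{2} \approx 3.6945,$$
$$g(4) = 8\left(1-\frac{1}{k-3}\right)^{k-3} \leq \frac{8}{e} \approx 2.9430,$$
and observe that both values are smaller than $128e/81 \approx 4.2955$, which gives the desired inequality.

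The only mildly non-routine step is the calculus in the second paragraph: identifying $c^\ast = 2k/(k-1)$ as the unique maximiser of $g$ on $(0,k)$ and verifying that $1 < c^\ast < 4$ for all relevant $k$. Everything else is a one-line exponential estimate followed by a numerical comparison, so I do not expect any real obstacle.
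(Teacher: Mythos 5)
Your proof is correct and follows essentially the same route as the paper's: both start from inequality~\eqref{eq:PGvertex}, reduce to a one-variable function of $c_u$, locate its peak inside the excluded interval $(1,4)$, and finish with the estimate $(1+x)^t \leq e^{xt}$ together with a numerical comparison against $128e/81$. The only cosmetic difference is that the paper first replaces $\left(1+\tfrac{3-c_u}{k-3}\right)^{k-3}$ by $e^{3-c_u}$ and then checks monotonicity of the $k$-independent function $\tfrac{1}{2}c^2 e^{3-c}$ on $[0,1]$ and $[4,\infty)$, whereas you analyse the $k$-dependent function $g(c)$ directly (finding the critical point $c^\ast = 2k/(k-1)$) and invoke the exponential estimate only at the endpoints $c=1$ and $c=4$; both variants are fine.
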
  

\begin{proof}
By~\eqref{eq:PGvertex} we have 
\begin{align*}
D_k(G,u) &\leq \frac{1}{2} d_u^2 \left(\frac{n - d_u}{k-3}\right)^{k-3} = \frac{1}{2} \frac{n^2}{k^2} c_u^2 \left(\frac{k n - c_u n}{k(k-3)}\right)^{k-3}
= \frac{1}{2} \frac{n^{k-1}}{k^{k-1}} c_u^2 \left(\frac{k-c_u}{k-3}\right)^{k-3} \\
&= \frac{1}{2} \frac{n^{k-1}}{k^{k-1}} c_u^2 \left(1 + \frac{3-c_u}{k-3}\right)^{k-3}
\leq \frac{1}{2} \frac{n^{k-1}}{k^{k-1}} c_u^2 e^{3 - c_u}, 
\end{align*}
where the last inequality holds since $1 + x \leq e^x$ for every $x \in \mathbb{R}$.

Now, if $c_u \leq 1$, then 
\begin{align*}
\frac{1}{2} \frac{n^{k-1}}{k^{k-1}} c_u^2 e^{3 - c_u} \leq \frac{1}{2} \frac{n^{k-1}}{k^{k-1}} e^2 < \frac{128 e}{81} \cdot \frac{n^{k-1}}{k^{k-1}}.
\end{align*}
where the first inequality holds since $x^2 e^{3-x}$ is increasing in the interval $[0, 1]$.

\noindent Similarly, if $c_u \geq 4$, then 
$$
\frac{1}{2} \frac{n^{k-1}}{k^{k-1}} c_u^2 e^{3 - c_u} \leq \frac{1}{2} \frac{n^{k-1}}{k^{k-1}} \cdot \frac{4^2}{e} < \frac{128 e}{81} \cdot \frac{n^{k-1}}{k^{k-1}},
$$ 
where the first inequality holds since $x^2 e^{3-x}$ is decreasing for $x \geq 4$.
\end{proof}

\section{Induced $k$-cycles in graphs with a large minimum degree} \label{sec::largeMinDegree}
In this section we prove that graphs with a large minimum degree cannot contain too many induced $k$-cycles. 

\begin{lemma} \label{lem:mindeg2}
Let $G$ be a graph on $n$ vertices and let $v \in V(G)$ be a vertex of minimum degree. If $c_v \geq 2$, then 
$$
D_k(G,v) \leq \frac{128 e}{81} \cdot \left(\frac{n}{k}\right)^{k-1}.
$$ 
\end{lemma}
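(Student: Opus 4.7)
The plan is to refine the Pippenger-Golumbic estimate from Lemma~\ref{lem:ub2e}, applied at $v$, by exploiting the hypothesis that every other vertex of $G$ has degree at least $d_v$.

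As a first step, one rewrites the PG bound in terms of $c_v$, exactly as in the proof of Lemma~\ref{lem:rangec}, to get $D_k(G,v)\leq \tfrac{1}{2} c_v^2 e^{3-c_v} (n/k)^{k-1}$. A direct analysis of $\tfrac{1}{2}c^2 e^{3-c}$ shows that this bound is already at most $\tfrac{128e}{81}$ outside a sub-interval of $[1,4]$ centred near $c_v=2$. Combined with Lemma~\ref{lem:rangec}, which covers $c_v\le 1$ and $c_v\ge 4$, the remaining case is some sub-interval $[2,c^{\ast}]\subset[2,4]$ on which the bare PG estimate is insufficient but the minimum-degree hypothesis should provide enough structure.

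For $c_v\in[2,c^{\ast}]$, the task is to refine the extension-counting portion of the PG argument. In the PG labelling $1,2,4,\ldots,k,3$, after fixing $\phi(1)=v$ and $\phi(2),\phi(3)\in N_G(v)$, the first and last extension stages are $\phi(4)\in N_G(\phi(2))\setminus N_G[v]$ (a set of size $d_{\phi(2)}-1-x_{v,\phi(2)}$) and $\phi(k)\in N_G(\phi(3))\setminus N_G[v]$ (of size $d_{\phi(3)}-1-x_{v,\phi(3)}$). These are potentially considerably smaller than the coarse bound $n-d_v-1$ used by PG at each of the $k-3$ extension stages. The key structural identities to be combined are $\sum_{u\in N_G(v)} x_{v,u} = 2\,e(G[N_G(v)])$ and, by the minimum-degree hypothesis, $\sum_{u\in N_G(v)} d_u \ge d_v^2$. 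Averaging the refined first- and last-step estimates over $\phi(2),\phi(3)\in N_G(v)$ and applying Lemma~\ref{lem:excl} to the remaining $k-5$ middle stages on the residual ground set should produce a sharper upper bound on $D_k(G,v)$.

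The principal obstacle is to carry out the ensuing optimisation cleanly and obtain precisely the constant $\tfrac{128e}{81}$. The optimisation has to balance the two regimes in which $G[N_G(v)]$ has many edges (so the codegrees $x_{v,u}$ are large and the refined first-/last-step bounds are tight) versus few edges (where $\sum_{u\in N_G(v)} d_u$ is forced to be large by the minimum-degree hypothesis). I expect the extremal configuration to resemble a balanced blow-up of $C_k$ with $c_v=2$, where equality in the codegree and minimum-degree identities provides a natural check on the final constant.
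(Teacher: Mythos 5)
Your plan departs from the paper's argument in essential respects and, more seriously, it is not carried to completion. The paper bounds $D_k(G,v,w)$ for each \emph{fixed} neighbour $w=\phi(2)$: after $\phi(1)=v$ and $\phi(2)=w$, it refines both $\phi(3)$ (at most $d_v-x_{vw}$ choices) and $\phi(4)$ (at most $d_w-x_{vw}$ choices), then applies Lemma~\ref{lem:excl} to the remaining $k-4$ stages on the ground set $V(G)\setminus(N_G(v)\cup N_G(w))$. The minimum-degree hypothesis enters \emph{pointwise}, namely $c_w\geq c\geq 2$ for every $w\in N_G(v)$, which, together with $0\leq x_w\leq c$, confines the normalised quantity $(c-x_w)(c_w-x_w)e^{-(c_w-x_w)}$ to the compact rectangle $[0,c]\times[c,4]$, where a short calculus argument locates the maximum on the boundary and evaluates it. Your proposal instead tries to exploit the closing constraint $\phi(k)\in N_G(\phi(3))$, leaving only $k-5$ exclusion stages, and to control things via the averaged identities $\sum_u x_{vu}=2e(G[N_G(v)])$ and $\sum_u d_u\geq d_v^2$. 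This is a genuinely different route -- closer in spirit to the decomposition over $A_v$ used in Section~\ref{sec::main} -- but the averaging alone does not obviously give a clean optimisation: the residual ground set for the middle stages depends jointly on $\phi(2)$, $\phi(3)$, their degrees and their pairwise co-degrees (including $x_{\phi(2)\phi(3)}$, which your two identities do not track), so the intended factorisation does not hold as written. More importantly, your expectation that the optimisation should reproduce the constant $128e/81$ exactly is misplaced: the paper's proof of this lemma actually yields the stronger bound $D_k(G,v)\leq 4\cdot(n/k)^{k-1}$, which is strictly below $\tfrac{128e}{81}(n/k)^{k-1}$, and the constant $128e/81$ is only tight in Lemma~\ref{lem:program3}, not here. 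Since you explicitly defer the optimisation step as the ``principal obstacle,'' the proposal as it stands does not constitute a proof.
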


\begin{proof} 
For every vertex $w \in N_G(v)$, we can bound $D_k(G,v,w)$ from above as follows. As before, label the vertices of $C_k$ by $1, 2, 4, \ldots, k, 3$ along the cycle. We will upper bound the number of labeled induced embeddings $\phi$ of $C_k$ (labeled as above) into $G$, subject to $\phi(1) = v$ and $\phi(2) = w$. We have at most $d_v - x_{vw}$ choices for $\phi(3)$ and at most $d_w - x_{vw}$ choices for $\phi(4)$. For each such choice of $\phi(3)$ and $\phi(4)$ the choices of $\phi(i)$ for every $5 \leq i \leq k$ form a family which satisfies the exclusion property on the ground set $V(G) \setminus (N_G(v) \cup N_G(w))$, which has size $n - d_v - d_w + x_{vw}$. Hence, by Lemma~\ref{lem:excl}, we have
\begin{align} \label{eq:PGedgeGen}
D_k(G,v,w) \leq (d_v - x_{vw}) (d_w - x_{vw}) \left(\frac{n - d_v - d_w + x_{vw}}{k-4}\right)^{k-4}.
\end{align}
Let $c = c_v$ and, for every $w \in N_G(v)$, let $x_w = k x_{vw}/n$. Then
\begin{eqnarray}\label{eq:normalise}
D_k(G,v,w) &\leq & (d_v - x_{vw}) (d_w - x_{vw}) \left(\frac{n - d_v - d_w + x_{vw}}{k-4}\right)^{k-4}\nonumber \\ 
&=& \frac{n^2}{k^2} (c - x_w) (c_w - x_w) \left(\frac{k n - c n - c_w n + x_w n}{k(k-4)}\right)^{k-4}\nonumber \\ 
&=& \frac{n^{k-2}}{k^{k-2}} (c - x_w) (c_w - x_w) \left(1 + \frac{4 - c - c_w + x_w}{k-4}\right)^{k-4}\nonumber \\
&\leq & \frac{n^{k-2}}{k^{k-2}} (c - x_w) (c_w - x_w) e^{4 - c} e^{-(c_w - x_w)}.
\end{eqnarray}
Note that
$$
D_k(G,v) = \frac{1}{2} \sum_{w \in N_G(v)} D_k(G,v,w),
$$
where the factor $1/2$ is due to the fact that $v$ has precisely two neighbours in every $k$-cycle which contains it. Therefore 
\begin{align} \label{eq:vertexsum}
D_k(G,v) \leq \frac{n^{k-2}}{k^{k-2}} \cdot \frac{1}{2} e^{4-c} \sum_{w \in N_G(v)} (c - x_w) (c_w - x_w) e^{-(c_w - x_w)}.
\end{align}
Viewing $c$ as a parameter, we define a two variable real function 
$$
g_c(x_w, c_w) = (c - x_w) (c_w - x_w) e^{-(c_w - x_w)}.
$$ 
By Lemma~\ref{lem:rangec} we can assume that $c_w \leq 4$ holds for every $w \in N_G(v)$. Since, moreover, $d_v = \delta(G)$ by assumption, it follows that $0 \leq x_w \leq c \leq c_w \leq 4$ for every $w \in N_G(v)$. Thus, abandoning the graph structure, we can upper bound the right hand side of~\eqref{eq:vertexsum} as follows. 
\begin{equation} \label{eq:maximumXmCm}
D_k(G,v) \leq \frac{n^{k-1}}{k^{k-1}} \cdot \frac{1}{2} e^{4-c} c (c - x_m) (c_m - x_m) e^{-(c_m - x_m)},
\end{equation}
where $(x_m, c_m)$ is a point at which $g_c(x_w, c_w)$ attains its global maximum on $I_c = [0,c] \times [c,4]$ (note that such a point exists as $I_c$ is compact and $g_c$ is continuous; note also that it need not be unique).

Suppose first that $(x_m, c_m)$ lies in the interior of $I_c$. Differentiating with respect to $x_w$ yields 
\begin{equation} \label{eq::partialDerivativeX}
\frac{\partial g_c}{\partial x_w}(x_m,c_m) = g_c(x_m, c_m) \left(1 - \frac{1}{c - x_m} - \frac{1}{c_m - x_m}\right).
\end{equation}
Comparing~\eqref{eq::partialDerivativeX} to zero we obtain $1/(c - x_m) + 1/(c_m - x_m) = 1$ which, in particular, implies that $c_m - x_m > 1$. On the other hand, differentiating with respect to $c_w$ yields
\begin{equation} \label{eq::partialDerivativeC}
\frac{\partial g_c}{\partial c_w}(x_m, c_m) = g_c(x_m, c_m) \left(\frac{1}{c_m - x_m} - 1\right).
\end{equation}
Comparing~\eqref{eq::partialDerivativeC} to zero results in the contradiction $c_m - x_m = 1$. It follows that the point $(x_m, c_m)$ lies on the boundary of $I_c$. 

If $x_m = c$, then clearly $g_c(x_m, c_m) = 0$. Suppose then that $x_m = 0$. Then, by~\eqref{eq:maximumXmCm}, we have
\begin{align*}
\frac{k^{k-1}}{n^{k-1}}
D_k(G,v) \leq \frac{1}{2} c^2 e^{4-c} c_m e^{-c_m} \leq \frac{1}{2} c^3 e^{4-2c} \leq \frac{1}{2} 2^3 e^{4-2\cdot 2} =4 < \frac{128}{81} e,
\end{align*}
where the second inequality holds by Observation~\ref{obs:fx} since $c_m \geq c \geq 2$ by assumption, 
and the third inequality holds since $h(x) = x^3 e^{4-2x}/2$ is decreasing for $x \geq 2$.   
Finally, consider the case $x_m \in (0,c)$. In this case
$$
\frac{\partial g_c}{\partial x_w}(x_m, c_m) = 0
$$
and thus 
\begin{equation} \label{eq:partialx0}
1/(c - x_m) + 1/(c_m - x_m) = 1.
\end{equation}
This, in turn, implies that the (one-sided) partial derivative $\frac{\partial g_c}{\partial c_w}(x_m, c_m)$ is negative. It thus follows that $c_m = c$ as otherwise there would exist some $c < c' < c_m$ such that $g_c(x_m, c') > g_c(x_m, c_m)$ contrary to the maximality of $g_c(x_m, c_m)$. Therefore, $c_m - x_m = c - x_m = 2$ holds by~\eqref{eq:partialx0}. Hence, by~\eqref{eq:maximumXmCm}, we have
$$
\frac{k^{k-1}}{n^{k-1}} D_k(G,v) \leq \frac{1}{2} e^{4-c} c \cdot 4 e^{-2} = 2 c e^{2-c} \leq 4 < \frac{128e}{81}, 
$$
where the second inequality holds by Observation~\ref{obs:fx} since $c \geq 2$ by assumption. 
\end{proof}

\section{Proof of the main result} \label{sec::main}

In this section we will prove Theorem~\ref{thm:main}. Let $G$ be an arbitrary graph on $n$ vertices, let $v \in V(G)$ be an arbitrary vertex and let $u,w \in N_G(v)$ be two non-adjacent vertices in the neighbourhood of $v$. We will use Lemma~\ref{lem:excl} in order to bound $D_k(G,u,v,w)$ from above as follows. Label the vertices of $C_k$ by $1, 2, 4, 6, 7, \ldots, k-1, k, 5, 3$ along the cycle. By counting all labeled induced embeddings $\phi$ of $C_k$ (labeled as above) into $G$, subject to $\phi(1) = v, \phi(2) = u$ and $\phi(3) = w$ we obtain precisely the number of induced $k$-cycles containing $v,u$ and $w$. 

We have at most $d_u - x_{uv} - x_{uw} + z_{uvw}$ and $d_w - x_{vw} - x_{uw} + z_{uvw}$ choices for the images of $4$ and $5$, respectively. Since $\phi$ maps $C_k$ to an induced $k$-cycle of $G$, the choices of $\phi(i)$ for every $6 \leq i \leq k$ form a family which satisfies the exclusion property on the ground set $V(G) \setminus (N_G(v)\cup N_G(u)\cup N_G(w))$ of size at most $n - d_u - d_v - d_w + x_{uv} + x_{vw} + x_{uw} - z_{uvw}$. Applying Lemma~\ref{lem:excl} then yields

\begin{align*} 
D_k(G,u,v,w) &\leq (d_u - x_{uv} - x_{uw} + z_{uvw}) (d_w - x_{vw} - x_{uw} + z_{uvw}) \nonumber\\ 
&\cdot\left(\frac{n - d_u - d_v - d_w + x_{uv} + x_{vw} + x_{uw} - z_{uvw}}{k-5}\right)^{k-5}.
\end{align*}

Let $A_v = \{(u,w) \in N_G(v) \times N_G(v) : u \neq w \textrm{ and } uw \notin E(G)\}$ denote the set of ordered pairs of non-adjacent neighbours of $v$. Then
$$
D_k(G,v) = \frac{1}{2} \sum_{(u,w) \in A_v} D_k(G,u,v,w).
$$ 

Recall that $c_u = k d_u/n$, $c_w = k d_w/n$ and $c = k d_v/n$. Moreover, let $\bar{x}_{uv} = k x_{uv}/n$, $\bar{x}_{vw} = k x_{vw}/n$, $\bar{x}_{uw} = k x_{uw}/n$ and $\bar{z}_{uvw} = k z_{uvw}/n$. Then, similarly to~\eqref{eq:normalise} and~\eqref{eq:vertexsum}, we obtain 
\begin{eqnarray} \label{eq:Cherry} 
D_k(G,v) \leq \frac{n^{k-3}}{k^{k-3}} \cdot \frac{1}{2} e^{5-c} \sum_{(u,w) \in A_v} && \big[ (c_u - \bar{x}_{uv}-\bar{x}_{uw}+\bar{z}_{uvw}) (c_w - \bar{x}_{vw}- \bar{x}_{uw}+\bar{z}_{uvw}) \nonumber \\ 
&\cdot& e^{-(c_u + c_w - \bar{x}_{uv}- \bar{x}_{vw}- \bar{x}_{uw}+\bar{z}_{uvw})} \big].
\end{eqnarray}
In what follows we will no longer work with $D_k(G)$ itself, but will instead prove upper bounds on the above expression under some fairly general conditions. This is formally stated in the following lemma.

\begin{lemma} \label{lem:program1}
Let $G = (V,E)$ be a graph on $n$ vertices, satisfying $n/k \leq \delta(G) < 2n/k$. Let $v \in V$ be a vertex of minimum degree. Suppose that for every ordered pair $(u,w) \in A_v$ we are given a triple $(c_u^{uw}, c_w^{uw},x^{uw})$ of real numbers such that $c \leq c_u^{uw}, c_w^{uw} \leq 4, x^{uw} \geq \bar{z}_{uvw}, c_u^{uw} - \bar{x}_{uv} - x^{uw} + \bar{z}_{uvw} \geq 0$ and  $c_w^{uw} - \bar{x}_{vw} - x^{uw} + \bar{z}_{uvw} \geq 0$.
Then we have
\begin{eqnarray} \label{eq:program1}
\frac{1}{2} e^{5-c} \sum_{(u,w) \in A_v} && \big[(c_u^{uw} - \bar{x}_{uv} - x^{uw} + \bar{z}_{uvw}) (c_w^{uw} - \bar{x}_{vw}- x^{uw} + \bar{z}_{uvw}) \nonumber \\
&\cdot& e^{-(c_u^{uw} + c_w^{uw} - \bar{x}_{uv}- \bar{x}_{vw}- {x}^{uw} + \bar{z}_{uvw})} \big] \leq \frac{n^2}{k^2} \cdot \frac{128e}{81}.
\end{eqnarray}
\end{lemma}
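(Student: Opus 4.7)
The plan is to reduce the LHS of \eqref{eq:program1} to a finite-dimensional optimization problem that can be handled by Claim~\ref{cl:programA}. I would first normalize each summand: for each $(u,w) \in A_v$, set $a_{uw} = c_u^{uw} - \bar{x}_{uv} - x^{uw} + \bar{z}_{uvw}$, $b_{uw} = c_w^{uw} - \bar{x}_{vw} - x^{uw} + \bar{z}_{uvw}$, and $y_{uw} = x^{uw} - \bar{z}_{uvw}$. The hypotheses of the lemma guarantee $a_{uw}, b_{uw}, y_{uw} \ge 0$, and a direct algebraic manipulation shows that the $(u,w)$-summand is exactly $f(a_{uw})\, f(b_{uw})\, e^{-y_{uw}}$.

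Next, by Observation~\ref{obs:fx}, $f\le 1/e$ on $[0,\infty)$, so each summand is at most $e^{-2}$. Combined with the trivial count $|A_v|\le d_v(d_v-1)\le c^2 n^2/k^2$, this gives
$$\frac{1}{2} e^{5-c} \sum_{(u,w) \in A_v} f(a_{uw})\, f(b_{uw})\, e^{-y_{uw}} \le \frac{c^2 e^{3-c}}{2}\cdot \frac{n^2}{k^2}.$$
A short calculus check shows that $c^2 e^{3-c}/2 \le 128 e/81$ throughout an initial subinterval $[1, c^*]$ of $[1,2)$ (numerically $c^*\approx 1.18$), so \eqref{eq:program1} follows for $c$ in this range with no further work.

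For $c \in (c^*, 2)$ the coarse estimate above is too wasteful and a finer argument is required. Here the observation is that the summand attains its maximum $e^{-2}$ only at the isolated configuration $a_{uw}=b_{uw}=1$, $y_{uw}=0$, which pins down specific relations among the co-degrees $\bar{x}_{uv}$, $\bar{x}_{vw}$, $\bar{z}_{uvw}$ and the chosen triple $(c_u^{uw}, c_w^{uw}, x^{uw})$. Coupling the identity $\sum_{u\in N_G(v)} \bar{x}_{uv} = (k/n)\bigl(d_v(d_v-1)-|A_v|\bigr)$, which comes from $2\,e(G[N_G(v)]) = d_v(d_v-1)-|A_v|$, with the feasibility constraints $a_{uw}, b_{uw}\ge 0$, I would argue that a large $|A_v|$ forces small average co-degrees $\bar{x}_{uv}$, which in turn forces $a_{uw}$ away from $1$ on many pairs, and conversely. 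Summarising this trade-off through a handful of averaged parameters --- most naturally $|A_v|/(n^2/k^2)$ and the mean values of $\bar{x}_{uv},\bar{x}_{vw},\bar{z}_{uvw}$ across $A_v$ --- reduces the LHS of \eqref{eq:program1} to a low-dimensional constrained optimization, which is precisely what Claim~\ref{cl:programA} (whose proof is deferred to the appendix) is designed to resolve.

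The main obstacle is this refinement step: extracting the correct averaged quantities from the per-pair expression $f(a_{uw}) f(b_{uw}) e^{-y_{uw}}$, carefully accounting for the coupling between $|A_v|$ and the co-degree averages, and verifying that the extremum of the resulting program matches the sharp constant $128e/81$ delivered by Claim~\ref{cl:programA}.
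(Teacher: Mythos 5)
Your normalization of the summand to $f(a_{uw})\,f(b_{uw})\,e^{-y_{uw}}$ is correct, and dropping the factor $e^{-y_{uw}}\le 1$ is in spirit what the paper does too: the paper optimizes the three-variable function $g_{uw}$ over the domain $I_{uw}$, finds that the maximum is forced onto the face $x^{uw}=\bar z_{uvw}$ (i.e.\ $y_{uw}=0$), and hands the remaining two-variable problem to Lemma~\ref{lem:program2}. Your coarse bound $c^2e^{3-c}/2$ via $f\le e^{-1}$ and $|A_v|\le c^2n^2/k^2$ is also valid and settles an initial subinterval of $c$.

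The real issue is that the hard range is the one you wave at. The extremal configuration in the paper has $z=\min\{c,3/2\}=4/3$, which corresponds to $c=4/3\approx 1.33$, well inside $(c^*,2)$. So your refinement step is not a technicality: it has to carry essentially all of the content of the result, and as written it has a genuine gap. First, you have not identified the switching argument (Claims~\ref{cl::Lnoedges} and~\ref{cl::LGempty} inside Lemma~\ref{lem:program2}) that one needs in order to assume that every $u\in N_G(v)$ satisfies $c-\bar x_{uv}\ge 1$; without this $L_G=\emptyset$ reduction, the arguments of $f$ are not confined to $[1,2]$ and the concavity of $f$ on that interval --- which is the engine of the next step --- is unavailable. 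Second, and more seriously, you have not explained how the sum over $A_v$ becomes the optimization problem $A(c,m)$. The constraint $\sum_i z_i^2=\sum_i y_iz_i$ in $A(c,m)$ does not come from the identity $\sum_{u}\bar x_{uv}=(k/n)(d_v(d_v-1)-|A_v|)$ that you cite, but rather from introducing the complement graph $F=\overline{G}[N_G(v)]$, defining $y_u$ as the average of $z_w=c-x_w$ over the closed $F$-neighbourhood of $u$, applying Jensen to the inner sum via concavity of $f$ on $[1,2]$, and then observing the double-counting identity $\sum_u z_u y_u=\sum_u z_u^2$. That chain of reductions (Lemma~\ref{lem:program3}) is where the exact fit with $A(c,m)$ is manufactured, and your proposal does not supply or even gesture at the auxiliary graph $F$, the neighbourhood-average variables $y_u$, or the Jensen step. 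As you yourself note, this is ``the main obstacle,'' and leaving it unresolved means the proof is not complete.
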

\noindent
Before proving Lemma~\ref{lem:program1}, we will quickly show how it implies Theorem~\ref{thm:main}. 

\begin{proof} [Proof of Theorem~\ref{thm:main}]
Suppose $G$ maximises $D_k$ over all graphs on $n$ vertices, and let $v \in V(G)$ be such that $d_v = \delta(G)$. If $d_v < n/k$, then $D_k(G,v) \leq (128/81) e \cdot (n/k)^{k-1}$ holds by Lemma~\ref{lem:rangec}. Similarly, if $d_v \geq 2n/k$, then $D_k(G,v) \leq (128/81) e \cdot (n/k)^{k-1}$ holds by Lemma~\ref{lem:mindeg2}. On the other hand, if $n/k\leq d_v< 2n/k$, then, by Lemma~\ref{lem:rangec}, the assumptions of Lemma~\ref{lem:program1} hold with $(c_u^{uw}, c_w^{uw},x^{uw})=(c_u,c_w,\bar{x}_{uw})$. Applying \eqref{eq:Cherry} and Lemma~\ref{lem:program1} we obtain $D_k(G,v) \leq (128/81) e \cdot (n/k)^{k-1}$. In either case, by Lemma~\ref{lem:homogen}, we conclude that 
$$
D_k(G) \leq (1+o(1))(128/81)e \cdot (n/k)^{k}.
$$ 
Hence, by definition of $G$, we have 
$$
I_{C_k}(n) \leq (1+o(1))(128/81)e \cdot (n/k)^{k}.
$$
Normalising and passing to the limit then yields $\ind(C_k)\leq (128/81)e \cdot (k!/k^k)$, as claimed. 
\end{proof}

\begin{proof}[Proof of Lemma~\ref{lem:program1}] 
To simplify notation, we will abbreviate $\bar{x}_{uv}, \bar{x}_{vw}$ and $\bar{z}_{uvw}$ to $x_u, x_w$ and $z_{uw}$, respectively. Fix an arbitrary pair of vertices $(u,w)\in A_v$ and, viewing $c, x_u, x_w$ and $z_{uw}$ as parameters (satisfying $c \geq x_u, x_w \geq z_{uw}$), define a three variable real function 
$$
g_{uw}(x, c_u, c_w) = (c_u - x_{u} - x + z_{uw}) (c_w - x_{w} - x + z_{uw}) e^{-(c_u + c_w - x_{u} - x_w - x + z_{uw})}.
$$ 
Without loss of generality we may assume that $(x^{uw}, c_u^{uw}, c_w^{uw})$ is a point at which $g_{uw}(x, c_u, c_w)$ attains its global maximum on the compact domain
$$
I_{uw} = \{(x, c_u, c_w) : c \leq c_u, c_w \leq 4, x \geq z_{uw}, c_u - x_u - x + z_{uw} \geq 0, c_w - x_w - x + z_{uw} \geq 0\}.
$$ 

Suppose first that $(x^{uw}, c_u^{uw}, c_w^{uw})$ lies in the interior of $I_{uw}$. Differentiating $g_{uw}$ with respect to $x$ yields
$$
\frac{\partial g_{uw}}{\partial x}(x^{uw}, c_u^{uw}, c_w^{uw}) = g_{uw}(x^{uw}, c_u^{uw}, c_w^{uw}) \left(1 - \frac{1}{c_u^{uw} - x_u - x^{uw} + z_{uw}} - \frac{1}{c_w^{uw} - x_w - x^{uw} + z_{uw}}\right).
$$
On the other hand, differentiating $g_{uw}$ with respect to $c_u$ and $c_w$ yields
\begin{equation*} 
\frac{\partial g_{uw}}{\partial c_u}(x^{uw}, c_u^{uw}, c_w^{uw}) = g_{uw}(x^{uw}, c_u^{uw}, c_w^{uw}) \left(\frac{1}{c_u^{uw} - x_u - x^{uw} + z_{uw}} - 1\right)
\end{equation*}
and
\begin{equation*} 
\frac{\partial g_{uw}}{\partial c_w}(x^{uw}, c_u^{uw}, c_w^{uw}) = g_{uw}(x^{uw}, c_u^{uw}, c_w^{uw}) \left( \frac{1}{c_w^{uw} - x_w - x^{uw} + z_{uw}} - 1\right).
\end{equation*}
Comparing all three partial derivatives to zero results in a contradiction. It follows that the point $(x^{uw}, c_u^{uw}, c_w^{uw})$ lies on the boundary of $I_{uw}$. 

Clearly, we cannot have $c_u^{uw} - x_u - x^{uw} + z_{uw} = 0$ or $c_w^{uw} - x_w - x^{uw} + z_{uw} = 0$, as then $g_{uw}(x^{uw}, c_u^{uw}, c_w^{uw}) = 0$. Suppose that $x^{uw} \in (z_{uw}, c_u^{uw} - x_u + z_{uw}) \cap (z_{uw}, c_w^{uw} - x_w + z_{uw})$. In this case, the partial derivative $\frac{\partial g_{uw}}{\partial x}$ exists and equals zero. Hence,
\begin{equation} \label{eq:thecrunch}
1/(c_u^{uw} - x_u - x^{uw} + z_{uw}) + 1/(c_w^{uw} - x_w - x^{uw} + z_{uw}) = 1.
\end{equation}
This, in turn, implies that the partial derivatives $\frac{\partial g_{uw}}{\partial c_u}(x^{uw}, c_u^{uw}, c_w^{uw})$ and $\frac{\partial g_{uw}}{\partial c_w}(x^{uw}, c_u^{uw}, c_w^{uw})$ are negative. It thus follows that $c_u^{uw} = c$ as otherwise there exists some $c < c'_u < c_u^{uw}$ such that $g_{uw}(x^{uw}, c'_u, c_w^{uw}) > g_{uw}(x^{uw}, c_u^{uw}, c_w^{uw})$ contrary to the maximality of $g_{uw}(x^{uw}, c_u^{uw}, c_w^{uw})$. Similarly, $c_w^{uw} = c$. Combined with~\eqref{eq:thecrunch}, this implies that $c - x_u - x^{uw} + z_{uw} \geq 2$ or $c - x_w - x^{uw} + z_{uw} \geq 2$. In either case we have $c \geq 2$, contrary to our assumption that $d_v < 2n/k$. 

The only remaining case is when $x^{uw} = z_{uw}$ for every $(u,w) \in A_v$. In this case we have $g_{uw}(x^{uw}, c_u^{uw}, c_w^{uw}) = f(c_{u}^{uw} - x_u) f(c_{w}^{uw} - x_w)$ (recall that $f(x) = x e^{-x}$). We will treat this case in the following lemma.  
\end{proof}

\begin{lemma} \label{lem:program2}
Let $G = (V,E)$ be a graph on $n$ vertices, satisfying $n/k \leq \delta(G) < 2n/k$. Let $v \in V$ be a vertex of minimum degree. Suppose that for every ordered pair $(u,w) \in A_v$ we are given a pair $(c_u^{uw}, c_w^{uw})$ of real numbers such that $\max \{x_u, c\} \leq c_u^{uw} \leq 4$ and $\max \{x_w, c\} \leq c_w^{uw} \leq 4$. Then  
\begin{equation} \label{eq:program2}
\frac{1}{2} e^{5-c} \sum_{(u,w) \in A_v} f(c_u^{uw} - x_u) f(c_w^{uw} - x_w) \leq \frac{n^2}{k^2} \cdot \frac{128e}{81}.
\end{equation}
\end{lemma}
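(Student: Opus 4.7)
My plan is to reduce, via Cauchy--Schwarz and a per-vertex estimate, the problem to a one-variable calculus question on $c \in [1, 2)$ whose maximum is exactly $128e/81$, attained at $c = 4/3$.

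Mimicking the proof of Lemma~\ref{lem:program1}, I would set $F(x) := \max\{f(y): y \in [\max\{0, c-x\}, 4-x]\}$; the hypotheses immediately give $f(c_u^{uw} - x_u) \leq F(x_u)$. Since $1 \leq c < 2$, a short case analysis yields $F(x) = (c-x) e^{-(c-x)}$ on $[0, c-1)$ (where $y = 1$ is infeasible and $f$ is decreasing on $[c-x, \infty)$) and $F(x) = 1/e$ on $[c-1, c]$ (where $y = 1$ is admissible). It thus suffices to bound $S := \sum_{(u,w) \in A_v} F(x_u) F(x_w)$. Applying Cauchy--Schwarz to $S$ and exploiting the symmetry of $A_v$ under $(u,w) \mapsto (w,u)$---so that both resulting norms equal $\sum_{u \in N(v)} F(x_u)^2 (d_v - 1 - x_{uv})$, since for each $u$ the number of $w$ with $(u,w) \in A_v$ is $d_v - 1 - x_{uv}$---one obtains
\[
S \;\leq\; \sum_{u \in N(v)} F(x_u)^2 \,(d_v - 1 - x_{uv}).
\]

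Next, substituting $t_u := c - x_u \in [0, c]$, so that $d_v - 1 - x_{uv} = n t_u/k - 1 \leq n t_u/k$, and plugging in the two cases of $F$, each summand is at most $(n/k)\phi(t_u)$, where $\phi(t) := t/e^2$ for $t \in [0, 1]$ and $\phi(t) := t^3 e^{-2t}$ for $t \in [1, c]$. The function $\phi$ is continuous at $t = 1$, and since its derivative on $[1, c]$ equals $t^2 e^{-2t}(3 - 2t)$, it is increasing on $[0, 3/2]$ and decreasing on $[3/2, \infty)$. Thus $\max_{t \in [0, c]} \phi(t)$ equals $c^3 e^{-2c}$ if $c \leq 3/2$ and $27/(8e^3)$ if $c \geq 3/2$.

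Summing the per-vertex bound over the $d_v = cn/k$ vertices of $N(v)$ and multiplying by $\tfrac{1}{2} e^{5-c}$, the left-hand side of~\eqref{eq:program2} is bounded by $(n/k)^2 \Psi(c)$, where
\[
\Psi(c) = \begin{cases} \tfrac{1}{2} c^4 e^{5-3c} & \text{if } c \in [1, 3/2], \\ \tfrac{27}{16} c e^{2-c} & \text{if } c \in [3/2, 2). \end{cases}
\]
On the first piece, $\Psi'/\Psi = (4 - 3c)/c$ vanishes only at $c = 4/3$, where $\Psi(4/3) = \tfrac{1}{2}(4/3)^4 e = 128e/81$; on the second, $\Psi'/\Psi = (1-c)/c \leq 0$, so $\Psi$ is decreasing with maximum $\Psi(3/2) = 81\sqrt{e}/32$, which is at most $128e/81$ (equivalent to $\sqrt{e} \geq 6561/4096$, which holds). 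Hence $\Psi(c) \leq 128e/81$ throughout $[1, 2)$. The hardest step will be the final numerical check: the bound is saturated precisely at the interior critical point $c = 4/3$, so the argument has essentially no slack there, and any looser estimate at an earlier stage would fall short.
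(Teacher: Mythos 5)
Your proof is correct, and it takes a genuinely different and considerably more streamlined route than the paper's. The paper proves Lemma~\ref{lem:program2} by a multi-stage extremal argument: it first normalises $c_u^{uw}=\max\{x_u+1,c\}$ (as you do implicitly), then partitions $N_G(v)$ into the ``large co-degree'' set $L_G$ and its complement $S_G$, and shows via two Zykov-style switching claims (Claims~\ref{cl::Lnoedges} and~\ref{cl::LGempty}) that a maximiser has $L_G=\emptyset$. This reduces to Lemma~\ref{lem:program3}, which uses Jensen's inequality over the auxiliary graph $F=\overline{G}[N_G(v)]$ and then hands off the remaining work to the appendix optimisation problem $A(c,m)$, whose solution (Claim~\ref{cl:programA}) is a fairly involved Lagrange-multiplier/perturbation analysis. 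Your approach bypasses all of this: you bound each factor $f(c_u^{uw}-x_u)$ by the pointwise max $F(x_u)$, decouple the double sum via Cauchy--Schwarz (equivalently AM--GM, $ab\le\tfrac12(a^2+b^2)$, together with the symmetry of $A_v$) into a single sum $\sum_u F(x_u)^2(d_v-1-x_{uv})$, and then perform a one-variable optimisation. The per-vertex function $\phi$ glues continuously at $t=1$, peaks at $t=3/2$, and the resulting envelope $\Psi(c)$ is maximised at $c=4/3$ where it equals exactly $128e/81$; the check $\Psi(3/2)=81\sqrt e/32\le 128e/81$ handles the second branch. Every step I checked — the count $|\{w:(u,w)\in A_v\}|=d_v-1-x_{uv}$, the case split for $F$, the substitution $t_u=c-x_u\in[0,c]$, the derivatives of $\Psi$, and the numerical comparison $\sqrt e>6561/4096$ — is sound. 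Notably, Cauchy--Schwarz holds with equality precisely when all $F(x_u)$ are equal, which is consistent with the extremal configuration at $c=4/3$ having all $x_u=0$, so there is no hidden slack: your bound is tight exactly where the paper's is. What your route buys is a self-contained proof of the lemma that dispenses with the switching arguments, the auxiliary graph $F$, Lemma~\ref{lem:program3}, and the entire appendix; the trade-off is that the paper's version isolates the genuinely difficult optimisation explicitly, which may be what one wants for further refinements, but as a proof of the stated inequality yours is cleaner.
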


\begin{proof}
Fix an arbitrary vertex $u \in N_G(v)$. Without loss of generality we may assume that, for every $w$ with $(u,w) \in A_v$, $c_u^{uw}$ is a point at which $f(c_u - x_u)$ attains its global maximum on the closed interval $[\max \{x_u, c\}, 4]$. It readily follows from Observation~\ref{obs:fx} that $c_u^{uw} = \max \{x_u + 1, c\}$. Similarly $c_w^{uw} = \max \{x_w + 1, c\}$ holds for every $w \in N_G(v)$.

Let $L_G = \{u \in N_G(v) : x_u > c-1\}$ and let $S_G = N_G(v) \setminus L_G$. In light of the previous paragraph, the left hand side of~\eqref{eq:program2} equals
\begin{align} \label{eq:splitLS}
P_1(G,v) &:= e^{5-c} \left(X_G/2 + Y_G + Z_G/2 \right),
\end{align}
where 
$$
X_G = \sum_{\stackrel{(u,w) \in A_v}{u, w \in L_G}} e^{-2},
$$
$$
Y_G = \sum_{\stackrel{(u,w) \in A_v}{u \in L_G, w \in S_G}} e^{-1} f(c-x_w),
$$
and
$$
Z_G = \sum_{\stackrel{(u,w) \in A_v}{u, w \in S_G}} f(c - x_u) f(c - x_w).
$$
We may also assume that $(G,v)$ is such that $P_1(G, v)$ is maximal among all pairs $(G, v)$ which satisfy all the assumptions of Lemma~\ref{lem:program2}. 

A vertex $z \in L_G$ will be called \emph{borderline} if decreasing its co-degree $x_{zv}$ by $1$ would result in it being moved to $S_G$, otherwise $z$ will be called \emph{internal}. 
For an edge $uw \in E(G[L_G])$ (if such an edge exists), let $G^+_{uw}$ be any graph which is obtained from $G$ by deleting the edge $uw$ and adding two new edges $u y_1$, $w y_2$, where $y_1, y_2 \in V(G) \setminus (\{v\} \cup N_G(v))$ are arbitrary vertices. Observe that $\delta(G^+_{uw}) = d_v = \delta(G)$ holds for every edge $uw \in E(G[L_G])$. 

\begin{claim} \label{cl::Lnoedges}
$E(G[L_G]) = \emptyset$. 
\end{claim}

\begin{proof}
Suppose for a contradiction that $w_1 w_2$ is an edge of $G[L_G]$. Suppose first that $w_1$ and $w_2$ are both internal and thus $L_{G^+ _ {w_1 w_2}} = L_G$ and $S_{G^+ _{w_1 w_2}} = S_G$. It follows that $X_{G^+_{w_1 w_2}} > X_G$, $Y_{G^+_{w_1 w_2}} = Y_G$ and $Z_{G^+_{w_1 w_2}} = Z_G$. Therefore $P_1(G^+_{w_1 w_2}, v) > P_1(G, v)$, contrary to the assumed maximality of $P_1(G, v)$. Next, suppose that $w_1$ and $w_2$ are both borderline. Then
\begin{eqnarray*}
e^{c-5} \left[P_1(G^+ _{w_1 w_2}, v) - P_1(G, v) \right] = M + M(w_1) + M(w_2), 
\end{eqnarray*}
where
$$
M = f(c - x_{w_1} + k/n) f(c - x_{w_2} + k/n),
$$
\begin{eqnarray*}
M(w_1) &=& \sum_{\stackrel{z \in L_G \setminus \{w_1, w_2\}}{(w_1,z) \in A_v}} \left[e^{-1} f(c - x_{w_1} + k/n) - e^{-2} \right] \\ 
&+& \sum_{\stackrel{z \in S_G}{(w_1,z) \in A_v}} \left[f(c - x_z) f(c - x_{w_1} + k/n) - e^{-1} f(c - x_z) \right],
\end{eqnarray*}
and
\begin{eqnarray*}
M(w_2) &=& \sum_{\stackrel{z \in L_G \setminus \{w_1, w_2\}}{(w_2,z) \in A_v}} \left[e^{-1} f(c - x_{w_2} + k/n) - e^{-2} \right] \\ 
&+& \sum_{\stackrel{z \in S_G}{(w_2,z) \in A_v}} \left[f(c - x_z) f(c - x_{w_2} + k/n)  - e^{-1} f(c - x_z) \right].
\end{eqnarray*}

Since $w_1$ is borderline, it follows that $x_{w_1} > c - 1$ and $x_{w_1} - k/n \leq c - 1$, or, equivalently, $1 \leq c - x_{w_1} + k/n < 1 + k/n$. It thus follows by Observation~\ref{obs:fx} that $e^{-1} = f(1) \geq f(c - x_{w_1} + k/n) \geq f(1 + k/n) = (1+o(1)) e^{-1}$. Since, moreover, $w_2$ is borderline, we conclude that $M = (1+o(1)) e^{-2}$.    

Now, let $z \in L_G \setminus \{w_1, w_2\}$ be an arbitrary vertex such that $(w_1,z) \in A_v$. Then
\begin{equation*}
e^{-2} - e^{-1} f(c - x_{w_1} + k/n) \leq e^{-2} - e^{-1} f(1 + k/n) 
\leq e^{-2} \left[1 - (1 + k/n) (1 - k/n) \right] = O(n^{-2}),  
\end{equation*} 
where the first inequality holds 
by Observation~\ref{obs:fx}. A similar calculation shows that, for every $z \in S_G$ such that $(w_1,z) \in A_v$, we have 
\begin{eqnarray*}
f(c - x_z) \left[e^{-1} - f(c - x_{w_1} + k/n) \right] = O(n^{-2}).  
\end{eqnarray*}
Since, trivially, $|L_G| = O(n)$ and $|S_G| = O(n)$, we conclude that $M(w_1) = O(n^{-1})$. An analogous argument shows that $M(w_2) = O(n^{-1})$ as well. We conclude that
$$
P_1(G^+_{w_1 w_2}, v) > P_1(G, v) + e^{3-c}/2,
$$
contrary to the maximality of $P_1(G, v)$. 

The remaining case, where exactly one of the vertices $w_1$ and $w_2$ is borderline and the other is internal, can be treated similarly; we omit the straightforward details. We conclude that $E(G[L_G]) = \emptyset$ as claimed. 
\end{proof}   

Next, we will use Claim~\ref{cl::Lnoedges} to prove that, in fact, $L_G$ itself is empty. 

\begin{claim} \label{cl::LGempty}
$L_G = \emptyset$. 
\end{claim}

\begin{proof}
Suppose for a contradiction that $L_G$ is not empty and consider the following switching operation on $G$. Given vertices $w \in L_G$ and $u_1, u_2 \in S_G$ such that $u_1 u_2 \notin E(G)$, $u_1 w \in E(G)$ and $u_2 w \in E(G)$, let $G' = \sigma_{u_1, u_2,w}(G)$ be any graph which is obtained from $G$ by deleting the edges $u_1 w$ and $u_2 w$ and adding the edges $u_1 u_2$, $wy_1$ and $wy_2$ for some vertices $y_1, y_2 \in V(G) \setminus (\{v\} \cup N_G(v))$. Observe that $\delta(G') = d_v = \delta(G)$.   

Note that, under the assumption that $L_G$ is not empty, a switching operation exists. This follows from the fact that, by definition, $x_z > x_{z'}$ for every $z \in L_G$ and $z' \in S_G$, and from Claim~\ref{cl::Lnoedges}. Let $G' = \sigma_{u_1, u_2, w}(G)$ be such a switch. 
If $w \in L_{G'}$ then we obtain  
\begin{align*}
e^{c-5} \left[P_1(G', v) - P_1(G, v) \right] &= e^{-1}f(c - x_{u_1})+ e^{-1}f(c - x_{u_2}) -  f(c - x_{u_1})f(c - x_{u_2})\\
&= e^{-2}-(e^{-1}-f(c-x_{u_1}))(e^{-1}-f(c-x_{u_2})).
\end{align*}
\noindent
Since $1\leq c-x_{u_1}\leq 2$, by Observation~\ref{obs:fx} we have $f(c-x_{u_1})\geq 2e^{-2}$, and analogously $f(c-x_{u_2})\geq 2e^{-2}$. Thus 
$$e^{c-5} \left[P_1(G', v) - P_1(G, v) \right]\geq e^{-2}(1-(1-2/e)^2)> 0.$$ 

Similarly, if $w\in S_{G'}$, we obtain
$$
e^{c-5} \left[P_1(G', v) - P_1(G, v) \right] = M + M(w)
$$
where 
\begin{align*}
M &=f(c-x_w+2k/n)f(c - x_{u_1})+ f(c-x_w+2k/n)f(c - x_{u_2}) -  f(c - x_{u_1})f(c - x_{u_2})\\
&=(1+o(1))e^{-1}f(c - x_{u_1})+ (1+o(1))e^{-1}f(c - x_{u_2}) -  f(c - x_{u_1})f(c - x_{u_2})\\
&> \frac{1}{2}e^{-2}(1-(1-2/e)^2),
\end{align*}
and
\begin{eqnarray*}
M(w) &=& \sum_{\stackrel{z \in L_G}{(w, z) \in A_v}} e^{-1} \left[f(c - x_{w} + 2k/n) - e^{-1} \right] \\ 
&+& \sum_{\stackrel{z \in S_G}{(w, z) \in A_v}} f(c - x_z) \left[f(c - x_{w} + 2k/n) - e^{-1} \right].
\end{eqnarray*}
A similar calculation to the one used for $M(w_1)$ in Claim~\ref{cl::Lnoedges} shows that $M(w) = O(n^{-1}) = o(1)$. 
We conclude that, in either case, $P_1(G',v) > P_1(G,v)$, contrary to the maximality of $P_1(G,v)$. 
\end{proof}
\noindent
Since $L_G = \emptyset$ by Claim~\ref{cl::LGempty}, it follows that $S_G = N_G(v)$ and thus $P_1(G,v)$ becomes 
$$
P_2(G,v) := \frac{1}{2} e^{5-c} \sum_{(u,w) \in A_v} f(c - x_u) f(c - x_w).
$$ 
\noindent
We will treat this case in the following lemma. 
\end{proof}

\begin{lemma} \label{lem:program3}
Let $G = (V,E)$ be a graph on $n$ vertices, satisfying $n/k \leq \delta(G) < 2n/k$. Let $v \in V$ be a vertex of minimum degree. Suppose that $c - x_u \geq 1$ holds for every $u \in N_G(v)$. Then 
\begin{equation*} 
P_2(G,v) \leq \frac{n^2}{k^2} \cdot \frac{128e}{81}.
\end{equation*}
\end{lemma}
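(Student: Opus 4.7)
Write $y_u := c - x_u$ for each $u \in N_G(v)$, so that the hypothesis $c - x_u \geq 1$ together with $x_u \geq 0$ gives $y_u \in [1, c]$, and recall $c \in [1,2)$ from the bounds on $\delta(G)$. For each such $u$ let $a_u$ denote the number of vertices of $N_G(v)\setminus\{u\}$ non-adjacent to $u$; since $d_v = cn/k$ and $x_{uv} = x_u n/k$, we have $a_u = d_v - 1 - x_{uv} = (n/k)y_u - 1 \le (n/k)y_u$, and $\sum_u a_u = |A_v|$. The plan is to decouple the product $f(y_u)f(y_w)$ via AM--GM, reduce the resulting bound to a pointwise estimate on $h(y) := y^3 e^{-2y}$, and then optimize in $c$ with a short case split at $c = 3/2$.

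Applying $f(y_u)f(y_w) \le \tfrac12(f(y_u)^2 + f(y_w)^2)$ and summing over $(u,w) \in A_v$, noting that each vertex appears as a coordinate in exactly $2a_u$ pairs of $A_v$, yields
\[
\sum_{(u,w)\in A_v} f(y_u)f(y_w) \;\le\; \sum_{u\in N_G(v)} a_u f(y_u)^2 \;\le\; \frac{n}{k}\sum_{u\in N_G(v)} y_u \cdot y_u^2 e^{-2y_u} \;=\; \frac{n}{k}\sum_{u\in N_G(v)} h(y_u).
\]
A direct computation gives $h'(y) = y^2(3-2y)e^{-2y}$, so $h$ is increasing on $[0,3/2]$ and decreasing on $[3/2,\infty)$. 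Writing $M(c) := \max_{y\in[1,c]} h(y)$ gives $M(c) = c^3 e^{-2c}$ when $c \le 3/2$ and $M(c) = 27/(8e^3)$ when $c > 3/2$. Combining with $|N_G(v)| = d_v = cn/k$ and the pointwise bound $h(y_u) \le M(c)$, we get
\[
P_2(G,v) \;\le\; \frac{e^{5-c}}{2}\cdot \frac{n}{k} \cdot d_v \cdot M(c) \;=\; \frac{n^2}{k^2}\cdot \frac{c\, e^{5-c} M(c)}{2}.
\]

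It now remains to check that $\tfrac12 c\,e^{5-c} M(c) \le 128 e/81$ on $[1,2)$. For $c \in [1, 3/2]$ the expression equals $\tfrac12 c^4 e^{5-3c}$, whose derivative $\tfrac12 c^3(4-3c)e^{5-3c}$ vanishes at $c = 4/3 \in [1,3/2]$, giving the maximum $\tfrac12 (4/3)^4 e = 128 e/81$ exactly. For $c \in (3/2, 2)$ the expression becomes $\tfrac{27}{16} c\, e^{2-c}$, which is decreasing on $[1, \infty)$; its supremum on this range is approached at $c = 3/2$ and equals $81 \sqrt e/32$. The inequality $81\sqrt e/32 < 128 e/81$ is equivalent, after squaring, to $e > (6561/4096)^2 \approx 2.566$, which holds. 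In either case $P_2(G,v) \le (128e/81)(n/k)^2$, as required.

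The only step that might appear wasteful is AM--GM on the product $f(y_u)f(y_w)$, so I would expect this to be the main conceptual obstacle to worry about. However, the inequality is actually an \emph{equality} at the conjectured extremal configuration $c = 4/3$ with $G[N_G(v)]$ empty (where every $y_u$ equals $4/3$), which is precisely the blow-up configuration underlying Conjecture~\ref{conj:asympt}; thus no tightness is lost at the optimum, and this simple estimate is sharp enough to yield the desired constant.
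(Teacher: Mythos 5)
Your proof is correct, and it takes a genuinely simpler route than the paper. With $z_u := c - x_u$, the paper rewrites $P_2(G,v)$ as a sum over the complement graph $F = \overline{G}[N_G(v)]$, applies Jensen's inequality to the inner sum $\sum_{w \in \bar{N}_F(u)} f(z_w)$ (using concavity of $f$ on $[1,2]$, which is where the hypothesis $c - x_u \ge 1$ enters), introduces the weighted neighbourhood averages $y_u$, and thereby reduces the problem to the constrained optimisation $A(c,m)$: maximise $\sum z_i^2 y_i e^{-z_i-y_i}$ subject to $1 \le y_i, z_i \le c$ and $\sum z_i^2 = \sum y_i z_i$. Showing that this optimum sits at $y_i = z_i = \min\{c,3/2\}$ takes a two-page Lagrange-multiplier case analysis in the appendix (Claim~\ref{cl:programA}). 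Your AM--GM step $f(y_u)f(y_w) \le \tfrac12\bigl(f(y_u)^2 + f(y_w)^2\bigr)$ decouples the bilinear sum directly and lands on $\sum_u a_u f(y_u)^2 \le (n/k)\sum_u h(y_u)$ with $h(y) = y^3 e^{-2y}$, which is exactly the diagonal $y_i = z_i$ of the paper's objective; the pointwise bound $h(y_u) \le h(\min\{c,3/2\})$ then does the work of the entire appendix. Both arguments converge on the same one-variable optimisation $\tfrac12 c^4 e^{5-3c}$ (plus a routine check for $c > 3/2$) with extremum at $c = 4/3$, and your closing remark that AM--GM is an equality at the uniform configuration $y_u \equiv 4/3$ is precisely the reason this coarser-looking estimate loses nothing at the optimum — the paper reaches the same conclusion the long way, by verifying that the constrained optimum is itself uniform. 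A minor side observation: since AM--GM needs no concavity domain and $h$ attains the same maximum on $[0,c]$ as on $[1,c]$, your argument does not really use $c - x_u \ge 1$ inside the lemma, whereas the paper's Jensen step does; this makes your route slightly more robust as well as shorter.
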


\begin{proof}
Let $G$ be a graph and let $v \in V(G)$ be a vertex such that $P_2(G, v)$ is maximal among all pairs $(G, v)$ which satisfy all the conditions of Lemma~\ref{lem:program3}. Let $F = \overline{G}[N_G(v)]$ and note that (for a fixed $k$) the quantity $P_2(G,v)$ is a function of $F$ and $n$. Hence, we may write $P(F,n) = P_2(G,v)$ and assume that $F$ maximises $P(F,n)$ amongst all eligible pairs. For every $u \in V(F)$ let $z_u = c - x_u$. Using this notation we can write 
$$ 
P_2(G,v) = P(F,n) = \frac{1}{2} e^{5-c} \sum_{(u,w) : uw \in E(F)} f(z_u) f(z_w).
$$ 

For every $u \in V(F)$, let $\bar{N}_F(u) := N_F(u) \cup \{u\}$ and let 
\begin{equation} \label{eq::yu}
y_u = \frac{1}{|\bar{N}_F(u)|} \cdot \sum_{w \in \bar{N}_F(u)} z_w = \frac{k}{n} \cdot \frac{1}{z_u} \sum_{w \in \bar{N}_F(u)} z_w,
\end{equation}
where the second equality holds since $|\bar{N}_F(u)| = z_u n/k$. 

Using the right hand side of~\eqref{eq::yu}, it is not hard to verify that $\sum_{u \in V(F)} z_u y_u = \sum_{u \in V(F)} z_u^2$. Since, by Observation~\ref{obs:fx}, the function $f$ is concave on $[1,2]$, it follows by Jensen's inequality that
\begin{align*}
P(F,n) &\leq \frac{1}{2} e^{5-c} \sum_{u \in V(F)} f(z_u) \sum_{w \in \bar{N}_F(u)} f(z_w) \leq \frac{1}{2} e^{5-c} \sum_{u \in V(F)} f(z_u) \cdot |\bar{N}_F(u)| f(y_u) \\
&= \frac{1}{2} e^{5-c} \sum_{u \in V(F)} f(z_u) \cdot \frac{n}{k} z_u \cdot f(y_u) = \frac{1}{2} e^{5-c} \frac{n}{k} \sum_{u \in V(F)} z_u^2 y_u e^{- z_u - y_u}.
\end{align*}

In order to bound $P(F,n)$ from above, we will now completely abandon the graph structure and analyse the function at hand under more general conditions. Given any positive integer $m$ and any real number $1 \leq c \leq 2$, let $A(c,m)$ denote the following optimisation problem: maximise $\sum_{i=1}^m z_i^2 y_i e^{- z_i - y_i}$ subject to the constraints $1 \leq y_i, z_i \leq c$ for every $1 \leq i \leq m$, (note that it is here that we use the assumption $c-x_u\geq 1$) and $\sum_{i=1}^m z_i^2 = \sum_{i=1}^m y_i z_i$. 


\begin{claim} \label{cl:programA}
$y_i = z_i = \min\{c, 3/2\}$ for every $1 \leq i \leq m$, is a solution to the optimisation problem $A(c,m)$.
\end{claim}
The proof of Claim~\ref{cl:programA} is a tedious yet straightforward exercise in multivariate calculus and is thus presented in the Appendix.

With Claim~\ref{cl:programA} at our disposal, we can now conclude the proof of Lemma~\ref{lem:program3} as follows. Let $z = \min\{c, 3/2\}$. Since the maximum of the optimisation problem $A(c, |F|)$ is achieved when $y_i = z_i = z$ for every $1 \leq i \leq |F|$ and since $|F| = c n/k$, we have
\begin{equation*}
P(F,n) \leq \frac{1}{2} e^{5-c} \frac{n}{k} \cdot |F| z^3 e^{-2z} = \frac{n^2}{k^2} \cdot \frac{1}{2} e^5 z^3 e^{-2z} c e^{-c}.
\end{equation*}
\noindent
Since $c \geq z \geq 1$ by Observation~\ref{obs:fx}, we have
$$
P(F,n) \leq \frac{n^2}{k^2} \cdot \frac{1}{2} z^4 e^{5-3z}.
$$
By differentiating, it is easy to see that the last expression attains its global maximum at $z = 4/3$, yielding
$$
P_2(G,v) \leq \frac{n^2}{k^2} \cdot \frac{1}{2} \left(\frac{4}{3}\right)^4 e^{5-4} = \frac{n^2}{k^2} \cdot \frac{128e}{81},
$$
as claimed.
\end{proof}

\section*{Appendix}


\begin{proof}[Proof of Claim~\ref{cl:programA}] 
We distinguish between two cases according to the value of $c$.

\medskip

\noindent \textbf{Case 1: $c \geq 3/2$}. It suffices to consider the optimisation problem $A(2,m)$. Note that the maximum is always attained, as we are dealing with a continuous function over a compact domain. Let $(y_1, \ldots, y_m, z_1, \ldots ,z_m)$ be a point which attains the maximum; our aim is to show that $y_i = z_i = 3/2$ for every $1 \leq i \leq m$.

\begin{claim}\label{cl:summary}
For every $1 \leq i \leq m$, either $y_i=z_i=3/2$ or $y_i < 3/2 < z_i$ or $z_i \leq 3/2 <y_i$.
\end{claim}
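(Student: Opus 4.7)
The plan is to reduce the statement to a one-parameter problem, using the single equality constraint. Fix a maximiser $(y^*,z^*)$ of $A(2,m)$ and an index $i$; freeze $(y_j,z_j)$ for $j\neq i$ at their maximising values. The constraint $\sum_j z_j(z_j-y_j)=0$ then pins $(y_i,z_i)$ to the level curve $z_i(z_i-y_i)=C_i$ for some constant $C_i$, which I parametrise by $z_i$ via $y_i=z_i-C_i/z_i$. A direct calculation of $\phi(z_i):=z_i^2 y_i e^{-z_i-y_i}$, after substituting $C_i=z_i^2-z_iy_i$, yields
$$
\frac{d\phi}{dz_i}=z_i\,e^{-z_i-y_i}\bigl[y_i(y_i+1)-z_i(3y_i-2)\bigr],
$$
so any stationary point of the restricted problem that is interior to $[1,2]^2$ must lie on the algebraic curve $\Gamma\colon z=\psi(y):=y(y+1)/(3y-2)$.

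Next I would analyse $\psi$ on $[1,2]$. Elementary calculus gives $\psi(1)=2$, $\psi(3/2)=3/2$, $\psi(2)=3/2$, and $\psi'(y)=(3y^2-4y-2)/(3y-2)^2$ has its unique root in $[1,2]$ at $y_0=(2+\sqrt{10})/3\in(3/2,2)$; hence $\psi$ decreases from $2$ down to $\psi(y_0)\approx 1.48$ on $[1,y_0]$ and then increases back to $3/2$ on $[y_0,2]$. Consequently $\psi(y)\geq 3/2$ on $[1,3/2]$ with equality only at $y=3/2$, and $\psi(y)\leq 3/2$ on $[3/2,2]$. This places every point of $\Gamma\cap[1,2]^2$ into one of the three allowed configurations of the claim; in particular, the endpoint $(y,z)=(2,3/2)$ of $\Gamma$ is exactly what forces the third case to be stated with the non-strict inequality $z_i\leq 3/2$.

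It remains to rule out maximisers on $\partial[1,2]^2$ lying in an excluded configuration, namely on one of the four facets $y_i=1$ with $z_i\leq 3/2$, $y_i=2$ with $z_i>3/2$, $z_i=1$ with $y_i\leq 3/2$, or $z_i=2$ with $y_i\geq 3/2$. On each facet I would evaluate the bracket $B(y_i,z_i):=y_i(y_i+1)-z_i(3y_i-2)$ and the slope $dy_i/dz_i=1+C_i/z_i^2$ of the level curve. For example, on $y_i=1$, $z_i\leq 3/2$ one has $B=2-z_i\geq 1/2>0$ and $dy_i/dz_i=(2z_i-1)/z_i>0$, so increasing $z_i$ is feasible (it pushes $y_i$ strictly above $1$) and strictly increases $\phi$, contradicting maximality. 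The other three facets are handled identically: in each case $B$ has the sign dictating the direction of improvement, and $dy_i/dz_i>0$ on the facet guarantees that the corresponding one-sided perturbation along the level curve stays inside $[1,2]^2$. The main obstacle is precisely this boundary bookkeeping---one must simultaneously track the sign of $B$, the sign of $dy_i/dz_i$, and the resulting direction of feasibility---but each individual verification reduces to a short algebraic calculation.
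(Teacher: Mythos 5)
Your proposal is correct and takes essentially the same route as the paper: both reduce to a one-variable problem along the level curve $z_i(z_i-y_i)=\text{const}$, derive the same interior curve $z=(y^2+y)/(3y-2)$ from the first-order condition, analyse that curve, and then rule out excluded boundary facets by one-sided derivatives plus a feasibility check via $dy_i/dz_i>0$. The only cosmetic difference is that the paper obtains the interior curve via Lagrange multipliers whereas you differentiate $\phi$ directly along the parametrised constraint, but your expression $z_i e^{-z_i-y_i}\bigl[y_i(y_i+1)-z_i(3y_i-2)\bigr]$ is precisely the paper's $\partial h/\partial z$ after simplification, so the two computations coincide.
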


\begin{proof}
Fix some $1 \leq i \leq m$. To simplify notation, at the moment we abbreviate $y_i$ to $y$ and $z_i$ to $z$. Let $\Delta = z^2 - yz$, and observe that $-1 \leq \Delta \leq 2$. Our current aim is to maximise $y z^2 e^{- y - z}$ subject to $1 \leq y, z \leq 2$ and $z^2 - yz = \Delta$.

Suppose first that $(y,z)$ lies at the boundary of $[1,2]^2$. Since $y = z - \Delta/z$, we can write  
$$
y z^2 e^{- y - z} = z^2 e^{-z} \left(z - \Delta/z \right) e^{- \left(z - \Delta/z \right)}:=h(z, \Delta).
$$
For a fixed $\Delta$, differentiating with respect to $z$ yields 
\begin{align}\label{eq::deltaz}
\frac{\partial h}{\partial z}(z, \Delta) = h(z, \Delta) \cdot \left(\frac{2}{z} - 1 + \frac{1 + \Delta/z^2}{z - \Delta/z} - (1 + \Delta/z^2)\right).
\end{align}
Suppose that $z = 2$. Then~\eqref{eq::deltaz} implies
\begin{equation} \label{eq::z2}
\frac{\partial h}{\partial z}(2, \Delta) = h(2, \Delta) \cdot \left(1 + \frac{\Delta}{4}\right) \left(\frac{1}{2 - \Delta/2} - 1\right) \leq 0,
\end{equation}
where the last inequality is strict unless $\Delta = 2$ or, equivalently, $y = 1$. We claim that indeed $z = 2$ entails $y = 1$. Suppose to the contrary that $y > 1$. Since the one-sided derivative~\eqref{eq::z2} is negative, there exist $1 < y_0 < y$ and $1 < z_0 < 2$ such that $z_0^2 - y_0 z_0 = \Delta$, but $h(z_0, \Delta) > h(2, \Delta)$ contrary to the assumed maximality of $h(2, \Delta)$.    

Next, suppose that $z=1$. Then, in particular, $\Delta \leq 0$ and by~\eqref{eq::deltaz} we have
$$
\frac{\partial h}{\partial z}(1, \Delta) = h(1, \Delta) \cdot \left(2 - 1 + \frac{1 + \Delta}{1 - \Delta} - 1 - \Delta \right) = h(1, \Delta) \cdot \frac{1 + \Delta^2}{1 - \Delta} > 0,
$$
i.e., the one-sided derivative is positive for every $y$. Hence, an analogous argument to the one used above for the previous case, shows that we can only have $z=1$ if $y=2$.

Next suppose that $y = 1$. Recalling that $y = z - \Delta/z$, by~\eqref{eq::deltaz} we obtain
$$
\frac{\partial h}{\partial z}(z, \Delta) = h(z, \Delta) \cdot \left(\frac{2}{z} - 1\right) \geq 0,
$$
where the last inequality is strict whenever $z < 2$. Hence, $y = 1$ entails $z = 2$.

Lastly, suppose that $y = 2$. Then, again by~\eqref{eq::deltaz}, we get
$$
\frac{\partial h}{\partial z}(z, \Delta) = h(z, \Delta) \cdot \left(\frac{2}{z} - 1 - \frac{1 + \Delta/z^2}{2}\right) = \frac{1}{2z^2} h(z, \Delta) \cdot (4z - 3z^2 - \Delta).
$$
Since $\Delta = z^2 - yz = z^2 - 2z$, we obtain
$$
\frac{\partial h}{\partial z}(z, \Delta) = \frac{1}{2z^2} h(z, \Delta) (6z - 4 z^2) = \frac{1}{z} h(z, \Delta) (3 - 2z),
$$
which is non-negative if and only if $z \leq 3/2$. Hence, $y = 2$ entails $z \leq 3/2$. 

Now, suppose that $1 < y, z < 2$. In this case, by the method of Lagrange multipliers, we infer the existence of a constant $\lambda\in \mathbb{R}$ satisfying
\begin{equation} \label{eq::lambday}
(1-y) z^2 e^{- z - y} = - \lambda z
\end{equation}
and 
\begin{equation} \label{eq::lambdaz}
(2z - z^2) y e^{- z - y} = \lambda (2z - y).
\end{equation}

Since $1 < y, z < 2$ by assumption, it follows that $2z - y \neq 0$. Hence, rearranging~\eqref{eq::lambday} and~\eqref{eq::lambdaz}, we obtain 
$$
\lambda = e^{- z - y} z (y-1),
$$
and 
$$
\lambda = e^{- z - y} z \left(\frac{2y - zy}{2z - y}\right)
$$
respectively. Therefore $(y-1) (2z-y) = 2y - zy$. Since, moreover, $3y - 2 \neq 0$, it follows that
\begin{align} \label{eq:zylagrange}
z = \frac{y^2 + y}{3y - 2}.
\end{align}
Straightforward calculations then show that
$$
\begin{cases}
z > 3/2 &\textrm{ if } y < 3/2 \\ 
z = 3/2 &\textrm{ if } y = 3/2 \\
z < 3/2 &\textrm{ if } y > 3/2.
\end{cases}
$$
This concludes the proof of Claim~\ref{cl:summary}.
\end{proof}
Next, suppose that there are $1 \leq i, j \leq m$ such that $y_i < z_i$ and $y_j > z_j$; note that due to the constraint $\sum_{i=1}^m z_i^2 = \sum_{i=1}^m y_i z_i$, such indices must exist unless $y_i = z_i$ for every $1 \leq i \leq m$. By Claim~\ref{cl:summary} we infer that $y_i < 3/2 < z_i$ and $y_j > 3/2 \geq z_j$, which implies that $y_i < y_j$ and $z_i > z_j$. Let $y'_i = y_i + \varepsilon_i$ and $y'_j = y_j - \varepsilon_j$ where $0 < \varepsilon_i, \varepsilon_j \ll 1$ are chosen such that $\varepsilon_i z_i = \varepsilon_j z_j$. Then 
$$
z_i^2 + z_j^2 - y'_i z_i - y'_j z_j = z_i^2 + z_j^2 - y_i z_i - y_j z_j - \varepsilon_i z_i + \varepsilon_j z_j = z_i^2 + z_j^2 - y_i z_i - y_j z_j. 
$$
Hence, by replacing $y_i$ with $y'_i$ and $y_j$ with $y'_j$ we do not violate the constraints in the optimisation problem $A(2,m)$. Using Taylor's expansions $e^{- \varepsilon_i} = 1 - \varepsilon_i + O(\varepsilon_i^2)$ and $e^{\varepsilon_j} = 1 + \varepsilon_j + O(\varepsilon_j^2)$, the change in the objective function is seen to be 
\begin{align*}
& y'_i e^{- y'_i} z_i^2 e^{-z_i} + y'_j e^{-y'_j} z_j^2 e^{-z_j} -
(y_i e^{-y_i} z_i^2 e^{-z_i} + y_j e^{-y_j} z_j^2 e^{-z_j}) \\
&= \varepsilon_i (1 - y_i) e^{-y_i} z_i^2 e^{-z_i} - \varepsilon_j (1 - y_j) e^{-y_j} z_j^2 e^{-z_j} + o(\varepsilon_i) + o(\varepsilon_j).
\end{align*}
Using the identity $\varepsilon_i z_i = \varepsilon_j z_j$, the right hand side of the above can be written as
\begin{align*}
\varepsilon_i z_i \cdot \left[(y_j - 1) e^{- y_j} z_j e^{- z_j} - (y_i - 1) e^{- y_i} z_i e^{- z_i} \right]+ o(\varepsilon_i) + o(\varepsilon_j).
\end{align*}
Note that, for $1 \leq x \leq 2$, the function $(x-1) e^{-x}$ is non-negative and strictly increasing, and the function $x e^{-x}$ is non-negative and strictly decreasing. Since, moreover, $y_j >  y_i$ and $z_j < z_i$, it follows that 
$$
(y_j - 1) e^{-y_j} z_j e^{-z_j} - (y_i - 1) e^{-y_i} z_i e^{-z_i} > 0.
$$
Hence 
$$
y'_i e^{-y'_i} z_i^2 e^{-z_i} + y'_j e^{-y'_j} z_j^2 e^{-z_j} - (y_i e^{-y_i} z_i^2 e^{-z_i} + y_j e^{-y_j} z_j^2 e^{-z_j}) > 0,
$$
increasing the value of the objective function, contrary to the assumed maximality.
 
Therefore, the only way the maximum can be attained is when $y_i = z_i$ for every $1 \leq i \leq m$, which, by Claim~\ref{cl:summary}, can only happen when $y_i = z_i = 3/2$ for every $1 \leq i \leq m$.  

\medskip

\noindent \textbf{Case 2: $c < 3/2$}. The proof of this case is fairly similar to that of Case 1. As before, let $(y_1, \ldots, y_m, z_1, \ldots ,z_m)$ be a point which attains the maximum of $A(c, m)$; our aim is to show that $y_i = z_i = c$ for every $1 \leq i \leq m$. The following claim and its proof (which is where we use the fact that $c < 3/2$) are analogous to Claim~\ref{cl:summary} in Case 1. We omit the details.
\begin{claim} \label{cl:atleastonec}
$y_i = c$ or $z_i = c$ holds for every $1 \leq i \leq m$.
\end{claim}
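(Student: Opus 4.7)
The plan is to treat the optimisation problem $A(c,m)$ by Lagrange multipliers together with a two-point exchange argument, splitting into the two cases $c \geq 3/2$ and $c < 3/2$ as suggested by the form of $\min\{c, 3/2\}$. Existence of a maximiser follows from compactness, so the task is to show that every maximiser must coincide with the symmetric point $y_i = z_i = \min\{c, 3/2\}$.

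For the case $c \geq 3/2$, I would first note that the conjectured maximiser lies in the interior of $[1,c]^{2m}$ whenever $c > 3/2$, so enlarging $c$ up to $2$ only enlarges the admissible region, and it suffices to handle $A(2,m)$. At any interior critical point, the Lagrange equations for the constraint $\sum_i z_i(z_i-y_i) = 0$ eliminate the multiplier to give the one-dimensional relation
\[
z_i \;=\; \frac{y_i^2 + y_i}{3 y_i - 2},
\]
and a short calculation shows that this curve meets the diagonal only at $(3/2,3/2)$, and otherwise forces strict separation on opposite sides of $3/2$. For points $(y_i,z_i)$ on the boundary of $[1,2]^2$, I would eliminate $y_i$ via $y_i = z_i - \Delta_i/z_i$, where $\Delta_i = z_i^2 - y_i z_i$ is fixed by the constraint, and compute the one-sided derivatives of the resulting one-variable function $h(z, \Delta_i) = z^2(z - \Delta_i/z) e^{-z - (z - \Delta_i/z)}$ along each of the four edges $y = 1$, $y = 2$, $z = 1$, $z = 2$; each calculation pins down the admissible configurations. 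Combining the interior and boundary analyses yields the trichotomy: for every $i$, either $(y_i,z_i) = (3/2,3/2)$, or $y_i < 3/2 < z_i$, or $z_i \leq 3/2 < y_i$.

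The second step is the exchange argument that upgrades this trichotomy to $y_i = z_i$ for every $i$. If not all $y_i$ equal $z_i$, then the constraint $\sum_i z_i(z_i - y_i) = 0$ guarantees indices $i$ with $y_i < z_i$ and $j$ with $y_j > z_j$; the trichotomy then forces $y_i < y_j$ and $z_i > z_j$. I would perturb $y_i \mapsto y_i + \varepsilon_i$ and $y_j \mapsto y_j - \varepsilon_j$ with $\varepsilon_i z_i = \varepsilon_j z_j$, which preserves the constraint. A first-order Taylor expansion reduces the change in the objective to $\varepsilon_i z_i \bigl[(y_j - 1) z_j e^{-y_j - z_j} - (y_i - 1) z_i e^{-y_i - z_i}\bigr] + o(\varepsilon)$, which is strictly positive because $x \mapsto (x-1)e^{-x}$ is increasing and $x \mapsto xe^{-x}$ is decreasing on $[1,2]$ (both by Observation~\ref{obs:fx}). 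This contradicts maximality and completes Case~1.

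For the case $c < 3/2$, the same Lagrangian gives the displayed relation, but this relation has no solution inside $[1,c]^2$ because it forces $z_i \geq 3/2 > c$ whenever $y_i \leq 3/2$. Hence at least one of $y_i, z_i$ must lie on the boundary of $[1,c]$ for every $i$, and a variant of the boundary analysis from Case~1 yields the weaker statement $y_i = c$ or $z_i = c$ for every $i$. A structurally identical exchange argument then upgrades this to $y_i = z_i = c$ for all $i$. The main obstacle throughout is the boundary analysis leading to the trichotomy in Case~1 and to the ``at least one coordinate equals $c$'' assertion in Case~2: the computations themselves are elementary one-variable calculus, but covering all four edges with the correct sign information is delicate, and it is there that most of the technical work lives.
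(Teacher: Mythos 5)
Your proposal is correct and follows essentially the same route the paper intends: you identify that the Lagrange relation $z=(y^2+y)/(3y-2)$ has no solution in $(1,c)^2$ when $c<3/2$ (since it forces $z>3/2>c$), so any maximiser must have each $(y_i,z_i)$ on the boundary of $[1,c]^2$, and the edge analysis from Claim~\ref{cl:summary} (the sign of $\partial h/\partial z$ on $y_i=1$ and on $z_i=1$) then rules out $y_i=1$ with $z_i<c$ and $z_i=1$ with $y_i<c$, giving $y_i=c$ or $z_i=c$. This is precisely what the paper means by ``analogous to Claim~\ref{cl:summary}, which is where we use the fact that $c<3/2$.''
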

%

Next, suppose that there are $1 \leq i, j \leq m$ such that $y_i < z_i$ and $y_j > z_j$; note that due to the constraint $\sum_{i=1}^m z_i^2 = \sum_{i=1}^m y_i z_i$, such indices must exist unless $y_i = z_i$ for every $1 \leq i \leq m$. By Claim~\ref{cl:atleastonec}, it follows that $y_i < c, z_i = c$ and $y_j = c, z_j < c$.

Similarly to Case 1, let $y'_i = y_i + \varepsilon_i$ and $y'_j = c - \varepsilon_j$, where $0 < \varepsilon_i, \varepsilon_j \ll 1$ are chosen such that $\varepsilon_i c = \varepsilon_j z_j$. Hence 
$$
z_i^2 + z_j^2 - z_i y_i - z_j y_j = z_i^2 + z_j^2 - z_i y'_i - z_j y'_j.
$$ 
In other words, by replacing $y_i$ with $y'_i$ and $y_j$ with $y'_j$ we do not violate the constraints in the optimisation problem $A(c, m)$. Similarly to Case 1, the change in the objective function is
\begin{align*}
& y'_i e^{- y'_i} z_i^2 e^{-z_i} + y'_j e^{-y'_j} z_j^2 e^{-z_j} -
(y_i e^{-y_i} z_i^2 e^{-z_i} + y_j e^{-y_j} z_j^2 e^{-z_j}) \\
&= \varepsilon_i (1 - y_i) e^{-y_i} z_i^2 e^{-z_i} - \varepsilon_j (1 - y_j) e^{-y_j} z_j^2 e^{-z_j} + o(\varepsilon_i) + o(\varepsilon_j).
\end{align*}
As in Case 1, using the identity $\varepsilon_i c = \varepsilon_j z_j$, the right hand side of the above can be written as
\begin{align*}
\varepsilon_i c e^{- c} \cdot \left[(c - 1) z_j e^{- z_j} - (y_i - 1) e^{- y_i} c \right]+ o(\varepsilon_i) + o(\varepsilon_j).
\end{align*}

Similarly to Case 1, for $1 \leq x \leq c$, the function $(x-1) e^{-x}$ is strictly increasing and the function $x e^{-x}$ is strictly decreasing. Since, moreover, $y_i < c$ and $z_j < c$, it follows that 
$$
\varepsilon_i c e^{-c} \cdot[(c - 1) z_j e^{- z_j} - (y_i - 1) e^{- y_i} c] > 
\varepsilon_i c e^{-c} \cdot[(c - 1) c e^{-c} - (c - 1) e^{-c} c] = 0.
$$
Therefore, 
$$
y'_i e^{- y'_i} z_i^2 e^{-z_i} + y'_j e^{- y'_j} z_j^2 e^{- z_j} - (y_i e^{- y_i} z_i^2 e^{- z_i} + y_j e^{- y_j} z_j^2 e^{-z_j}) > 0,
$$
increasing the value of the objective function, contrary to the assumed maximality.
 
Hence, the only way the maximum can be attained is when $y_i = z_i$ for every $1 \leq i \leq m$, which, by Claim~\ref{cl:atleastonec}, can only happen when $y_i = z_i = c$ for every $1 \leq i \leq m$.  
\end{proof}

\end{document}